\documentclass{amsproc}

\usepackage{amssymb}
\usepackage{amsfonts}
\usepackage{amsmath}
\usepackage[latin1]{inputenc}
\usepackage{color}
\usepackage[mathscr]{euscript}
\usepackage{enumerate}

\numberwithin{equation}{section}

\newtheorem{theorem}[equation]{Theorem}
\newtheorem{lemma}[equation]{Lemma}
\newtheorem{proposition}[equation]{Proposition}

\theoremstyle{definition}
\newtheorem{definition}[equation]{Definition}

\theoremstyle{remark}
\newtheorem{remark}[equation]{Remark}

\newcommand{\lb}[1]{\text{$\mathscr{#1}$}}
\newcommand{\dgraphg}[1]{\text{$\lb{#1}$}}									
\newcommand{\dgraphupleg}[3]{\text{$\dgraphg{#1}=(\dgraphg{#1}^0,\dgraphg{#1}^1, #2, #3)$}}	

\newcommand{\dgraph}{\text{$\dgraphg{E}$}}									
\newcommand{\dgraphuple}{\text{$\dgraphupleg{E}{r}{d}$}}					
\newcommand{\scj}{\subseteq}
\newcommand{\nn}{\mathbb{N}}
\newcommand{\til}[1]{\widetilde{#1}}
\newcommand{\cidlg}[1]{#1\mathrm{-Idl}}
\newcommand{\cidl}{\cidlg{C}}

\begin{document}
	
\title[The boundary path space of a topological graph]{Recovering the boundary path space of a topological graph using pointless topology}
\author{Gilles G. de Castro}
\address{Departamento de Matemática, Centro de Ciências Físicas e Matemáticas, Universidade Federal de Santa Catarina, 88040-970 Florianópolis SC, Brazil.}
\email{gilles.castro@ufsc.br}
\keywords{Topological graphs, boundary path space, pointless topology}
\subjclass[2010]{Primary: 46L55, Secondary: 06D22, 37B10, 54B05, 54B10}

\begin{abstract}
	First, we generalize the definition of a locally compact topology given by Paterson and Welch for a sequence of locally compact spaces to the case where the underlying spaces are $T_1$ and sober. We then consider a certain semilattice of basic open sets for this topology on the space of all paths on a graph and impose relations motivated by the definitions of graph C*-algebra in order to recover the boundary path space of a graph. This is done using techniques of pointless topology. Finally, we generalize the results to the case of topological graphs.
\end{abstract}

\maketitle

%
%

\section{Introduction}

Topological graphs and their C*-algebras were introduced by Katsura as a way of generalizing both graph C*-algebras and homeomorphism C*-algebras \cite{MR2067120}. One usually would like to describe a C*-algebra as a groupoid C*-algebra in order to use techniques introduced by Renault \cite{MR584266}. For topological graph C*-algebras, this was done by Yeend \cite{MR2301938}. The unit space of Yeend's groupoid is the boundary path space of the topological graph (see \cite{MR3679613} for the proof that Yeend's definition and the one we use here are the same).

To understand the necessity of the boundary path space, we start by considering finite graphs. In this case, one can associate a compact space of infinite paths as a subspace of a a infinite product of finite sets, namely the vertices set. These are examples of one-sided shift spaces, and more specifically shifts of finite type \cite{MR1369092}. The class of C*-algebras associated to shifts of finite type are called Cuntz-Krieger algebras \cite{MR561974}. One of the results of \cite{MR1432596} by Kumjian et. al. was to generalize Cuntz-Krieger algebras to infinite graphs. Their approach used groupoids and for that they needed a generalization of the shift space of a graph. They accomplished that by not allowing the graph to have sources of infinite receivers. In this case, even if the infinite product of the set of vertices is not locally compact, the subspace of paths is.

The C*-algebra of a graph was later generalized to arbitrary graphs by Fowler, Laca and Raeburn using generators and relations to define the C*-algebra \cite{MR1670363}. The groupoid description of these C*-algebras in the case of graphs with no sources was done by Paterson using inverse semigroups \cite{MR1962477}. The key idea was that we need not only to consider infinite paths, but also some finite paths. With a certain topology on the set of all paths, infinite paths could converge to finite paths. This happens exactly in the presence of infinite receivers. We could later allow the graph to have sources by adding some isolated points in the path spaces. The end result is what is now known as the boundary path space \cite{MR3119197}.

To find the boundary path space in \cite{MR1962477}, Paterson first considered the set of all paths and reduced to the boundary path space by looking at the structure of the graph. One of Exel's main goal in \cite{MR2419901} was to obtain this reduction looking solely at the graph inverse semigroup without the knowledge of the graph. Exel then introduced the notion of the tight spectrum of an inverse semigroup. In the particular case of graph inverse semigroup, the tight spectrum is essentially the boundary path space of the graph. The author together with Boava and Mortari generalized this result to the more general context of labelled graphs \cite{MR3648984}.

One question that arises is if we can use Exel's framework in the case of topological graphs. As we mentioned in the first paragraph, we do have a boundary path space for a topological graph. The main problem is that the tight spectrum is totally disconnected while the boundary path space may not be. Central to this problem is the fact that the framework developed by Exel in \cite{MR2419901} is closely related to Boolean algebras, which in turn, by Stone duality, gives a totally disconnected space. However, one should be able to study the boundary path space of a topological graph using inverse semigroups due to the works of Resende \cite{MR2304314}, and later of Lawson and Lenz \cite{MR3077869}. There is a duality between a certain class of groupoids, of which Yeend's groupoid are part, and a certain class of inverse semigroups called pseudogroups. This duality is closed related to the duality seen in pointless topology between a certain class frames and a certain class of topological space as mentioned by Lawson and Lenz in \cite{MR3077869}.

The interesting thing about frames is that they can be presented by generators and relations \cite{MR698074}. The main goal of this paper is to define relations on the frame of opens sets of the set of all paths on a graph using as motivation one of the relations used to define the graph C*-algebra. We do this in such a way that it can be generalized to topological graphs.

The structure of the paper is as follows: in Section \ref{section:frame}, we recall some definitions and results from pointless topology used throughout the paper; in Section \ref{section:topology} we study the Paterson-Welch topology \cite{MR2146225} as being the patch topology of the topology given by cylinder sets (the former being the topology used on the space of all paths on a graph); in Section \ref{section:graph} we show how to obtain the boundary path of a discreet graph using pointless topology, which is then generalized to topological graphs in Section \ref{section:topolgical.graph}.

Some remarks on notation: we consider $0\in\nn$ and write $\nn^*$ for $\nn\setminus\{0\}$; $X^c$ represents the complement of a set $X$; and $[\cdot]$ represents the Iverson's brackets that returns $0$ if the argument is false and returns $1$ if the argument is true.

%
%

\section{Frames generated by semilattices and relations}\label{section:frame}

We start by recalling some terminology used in pointless topology \cite{MR698074,MR2868166,MR1002193}. Given a partially ordered set $(P,\leq)$, we say that $P$ is a (meet-)semilattice if the infimum of any two elements $a,b\in P$ exist, which is then called the meet of $a$ and $b$, and denoted by $a\wedge b$. We say that $P$ is a complete lattice, if the infimum and supremum of an arbitrary subset $I\scj P$ exist; they are called the meet and join of $I$ and denoted by $\bigwedge I$ and $\bigvee I$ respectively. A frame $F$ is complete lattice satisfying the following distributivity condition
\[a\wedge\bigvee I=\bigvee \{a\wedge b:b\in I\}\]
for all $a\in F$ and all $I\scj F$. A semilattice homomorphism is a function between semilattices that preserves finite meets, and a frame homomorphism is a function between frames that preserve finite meets and arbitrary joins.

The frame $\{0<1\}$ is denoted by $\mathbf{2}$ and it can be seen as the unique topology of a one-point space. A point in $F$ is a frame homomorphism $f:F\to\mathbf{2}$ and the set of all point in $F$ is denoted by $\mathrm{pt}(F)$. For each $a\in F$, define $\Sigma_a=\{f\in\mathrm{pt}(F):f(a)=1\}$, then $\{\Sigma_a\}_{a\in F}$ is a topology on $F$, and $\mathrm{pt}(F)$ with this topology is called the spectrum $F$. For a topological space $X$, its topology $\Omega(X)$ is a frame, and we say that $X$ is sober if $X$ is homeomorphic to $\mathrm{pt}(\Omega(X))$ (it is actually sufficient to show that the map $x\in X\to [x\in U]\in\mathrm{pt}(\Omega(X))$ is a bijection). On the other hand, a frame $F$ is called spatial if is isomorphic to the topology of a topological space and in this case, $F$ is isomorphic to $\Omega(\mathrm{pt}(F))$. Both $\Omega$ and $\mathrm{pt}$ can be made into functors and we arrive at an equivalence between the category of sober spaces with continuous functions and spatial frames with frame homomorphisms.

For the purposes of this paper, we are interested in defining a frame from a semilattice and some ``join relations''. This is done with the notion of a site given by Johnstone \cite{MR698074}. For a semilattice $A$, a coverage $C$ assigns to each $a\in A$ a set $C(a)$ of subsets of the lower set of $a$, $\downarrow a=\{b\in A:b\leq a\}$, called covering of $a$, in such a way that $S\in C(a)$ implies that $\{s\wedge b:s\in S\}\in C(b)$ for all $b\leq a$ (meet-stability condition). The pair $(A,C)$ is called a site.

From a site, one can define a frame freely generated by $(A,C)$ as a frame $B$ together with semilattice homomorphism $f:A\to B$ satisfying
\begin{equation}\label{eq:site}
f(a)=\bigvee_{s\in S}f(s)
\end{equation}
for all $a\in A $ and $S\in C(a)$, which is universal in the sense that if $B'$ is another frame with a semilattice homomorphism $f':A\to B'$ satisfying (\ref{eq:site}), then there is a unique frame homomorphism $g:B\to B'$ such that $f'=g\circ f$. One way to think is that we are imposing the relation $a=\bigvee S$.

To show the existence of such a frame, we use the notion of a $C$-ideal, which are subsets $I\scj A$ that are lower closed and such that if $I$ contains a covering of an element $a\in A$, then $a\in I$. The set of all C-ideals is denoted by $\cidl$ and the frame structure on $\cidl$ comes from the order by given by inclusion. In this case, the meet is given by the intersection and the join is the C-ideal generated by the union, that is, the intersection of all C-ideals containing the union \cite[Theorem~4.4.2]{MR1002193}. The map $f:A\to\cidl$ is such that $f(a)$ is the $C$-ideal generated by $a$, for any $a\in A$. Observe that if $I\in\cidl$, then $\bigvee_{a\in I}=I$.

The following lemma links the above idea with another way of thinking a frame as being generated by a semilattice.

\begin{lemma}\label{lemma:semilattice.generates.frame}
	Let $F$ be a frame and suppose that $A\scj F$ is a semilattice that generates $F$ in the sense that for all $x\in F$ there exists $B\scj A$ such that $x=\bigvee B$. Then defining $C(a)$ to be the set of all subsets $B\scj \downarrow a \cap A$ such that $\bigvee B=a$, then $C$ is a coverage on $A$ such that $F\cong \cidl$ as frames.
\end{lemma}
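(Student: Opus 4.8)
The plan is to prove the statement in two parts: first that $C$ is a coverage, and then to construct an explicit frame isomorphism $F\cong\cidl$ together with its inverse, rather than only invoking the universal property abstractly. For the first part, given $a\in A$, $S\in C(a)$ and $b\leq a$ in $A$, one notes that $s\wedge b\in A$ (as $A$ is a subsemilattice of $F$) and $s\wedge b\leq b$ for each $s\in S$, while the frame distributivity law in $F$ gives $\bigvee_{s\in S}(s\wedge b)=\big(\bigvee_{s\in S}s\big)\wedge b=a\wedge b=b$; hence $\{s\wedge b:s\in S\}\in C(b)$, so $(A,C)$ is a site. Next, the inclusion $\iota\colon A\to F$ is a semilattice homomorphism which, by the very definition of $C(a)$, satisfies $\iota(a)=\bigvee_{s\in S}\iota(s)$ for all $a\in A$ and $S\in C(a)$; so the universal property of $\cidl$ produces a unique frame homomorphism $g\colon\cidl\to F$ with $g\circ f=\iota$, where $f$ sends $a$ to the $C$-ideal it generates. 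Thus $g(f(a))=a$ for all $a\in A$.

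To build the inverse I would define $h\colon F\to\cidl$ by $h(x)=\{a\in A:a\leq x\}=\downarrow x\cap A$. First one checks $h(x)$ is a $C$-ideal: it is a lower set, and if it contains a covering $S\in C(a)$ then $a=\bigvee S\leq x$, so $a\in h(x)$. Preservation of finite meets, $h(x\wedge y)=h(x)\cap h(y)$, and of the top element are immediate (the latter using that $A$ join-generates $F$). The one point that genuinely requires an argument is that $h$ preserves arbitrary joins. For $I\scj F$, monotonicity gives $\bigvee_{y\in I}h(y)\scj h(\bigvee I)$; for the reverse, take $a\in h(\bigvee I)$, write each $y\in I$ as $y=\bigvee B_y$ with $B_y\scj A$ (possible since $A$ generates $F$), and use distributivity to get $a=a\wedge\bigvee I=\bigvee\{a\wedge b:y\in I,\ b\in B_y\}$. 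The set $S=\{a\wedge b:y\in I,\ b\in B_y\}$ lies in $\downarrow a\cap A$ and has join $a$, so $S\in C(a)$; since $S$ is contained in $\bigcup_{y\in I}h(y)$ and the join of the $h(y)$ in $\cidl$ is a $C$-ideal containing $S$, it must contain $a$. Hence $h$ is a frame homomorphism.

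Finally I would verify that $g$ and $h$ are mutually inverse. For $x\in F$, since every $C$-ideal $J$ satisfies $J=\bigvee_{a\in J}f(a)$ and $g$ preserves joins, $g(h(x))=\bigvee_{a\in\downarrow x\cap A}g(f(a))=\bigvee(\downarrow x\cap A)=x$, the last equality again from $A$ join-generating $F$. In the other direction, $h(g(f(a)))=h(a)=\downarrow a\cap A$, and this equals $f(a)$: it is a $C$-ideal containing $a$, hence contains $f(a)$, and being a lower set containing $a$ it is contained in $f(a)$. As the elements $f(a)$ join-generate $\cidl$ and both $h\circ g$ and $\mathrm{id}_{\cidl}$ are frame homomorphisms, they agree everywhere. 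Therefore $g$ is a frame isomorphism and $F\cong\cidl$. I expect the preservation of arbitrary joins by $h$ to be the only real obstacle: the key idea is to refine the arbitrary family $I$, using that $A$ join-generates $F$, into a single subset of $A$ below $a$ whose join is $a$, thereby exhibiting an element of $C(a)$ that certifies $a$ as a member of the join computed inside $\cidl$.
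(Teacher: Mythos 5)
Your proof is correct, but it takes a different route from the paper's at the decisive step. Both arguments obtain the same comparison map $g:\cidl\to F$, $g(I)=\bigvee I$, from the universal property, and both get surjectivity (respectively, the identity $g\circ h=\mathrm{id}_F$) from the hypothesis that $A$ join-generates $F$. The paper then finishes by proving directly that $g$ is \emph{injective}: it compares two $C$-ideals $I,J$ with $\bigvee I=\bigvee J$, using $\bigvee(I\cap J)=\bigvee I\wedge\bigvee J$ and the covering-closure property of $C$-ideals to force $I=J$. You instead exhibit an explicit candidate inverse $h(x)=\downarrow x\cap A$, check it is a $C$-ideal-valued frame homomorphism, and verify the two composites are identities; the only nontrivial verification, preservation of arbitrary joins, is where the distributivity law enters, via the refinement $a=\bigvee\{a\wedge b: b\in B_y,\ y\in I\}$ producing an element of $C(a)$ inside the generated ideal. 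This is essentially the same mechanism the paper's injectivity argument relies on, repackaged: your computation for $h(\bigvee I)\scj\bigvee_y h(y)$ is, in effect, the statement that a $C$-ideal contains every element of $A$ below its join, which is exactly what makes $g$ injective. What your version buys is an explicit formula for the inverse $g^{-1}(x)=\downarrow x\cap A$ (and, as a byproduct, the identification $f(a)=\downarrow a\cap A$), and it sidesteps the somewhat delicate order-theoretic comparison of two ideals in the paper's injectivity step; the cost is a longer list of routine verifications (meets, top, the two composites). Your identification of join-preservation of $h$ as the genuine content of the lemma is accurate.
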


\begin{proof}
	That $C$ is a coverage follows immediate from the distributivity of meets over arbitrary joins.
	
	Now, by the universal property of $\cidl$, we find the map $g:\cidl\to F$ given by $g(I)=\bigvee I$, which is a frame homomorphism that is surjective because $A$ generates $F$.
	
	Let $I,J\in \cidl$ be such that $\bigvee I=\bigvee J$, then by \cite[Proposition~4.4.2.2]{MR1002193}
	\[\bigvee(I\cap J)=\bigvee(I\wedge J)=\bigvee I\wedge\bigvee J=\bigvee I.\]
	
	Suppose now that $I\neq J$ and, without loss of generality, that $J\scj I$. Since $J$ is a lower set in $A$, there exists $a\in I$ such that $j<a$ for all $j\in J$. This implies that $\bigvee J\leq a\leq \bigvee I$. If $\bigvee J = a$, then $J$ is a cover of $a$, but since $J$ is a $C$-ideal, we would have that $a\in J$, which is a contradiction. If $\bigvee J < a$, then $\bigvee J < \bigvee I$, which is also a contradiction. If follows that $g$ is injective.
\end{proof}

In the context of pointless topology, the notion of subspace is substituted by sublocale. One way of defining a sublocale is by considering maps $\nu:F\to F$ on a frame $F$ satisfying $\nu(a\wedge b)=\nu(a)\wedge\nu(b)$, $a\leq \nu(a)$ and $\nu(\nu(a))\leq\nu(a)$ for all $a,b\in F$, called nuclei. The image of a nucleus $\nu$, is proven to be the set of fixed points of $\nu$ and it is a frame \cite[II.2.2]{MR698074}; also, it is called a sublocale of $F$.

One way of finding sublocales is by imposing new relations: suppose that $F$ is a frame generated by $A$ as in Lemma \ref{lemma:semilattice.generates.frame} and that $C'$ is another coverage on $A$. We can define a new coverage $\overline{C}=C\cup C'$ such that for each $a\in A$, $\overline{C}(a)=C(a)\cup C'(a)$. To impose the new relations on $F$, we consider the map $\nu:\cidl\to\cidl$ given by
\begin{equation}\label{eq:nucleus}
	\nu(I)=\bigcap_{\substack{I\scj I',\,I'\in\cidlg{\overline{C}}}}I'
\end{equation}
which can be proven to be a nucleus similar to what is done in \cite[II.2.11]{MR698074}. Since every $\overline{C}$-ideal is a $C$-ideal, the image of $\nu$ is exactly $\cidlg{\overline{C}}$. Using Lemma \ref{lemma:semilattice.generates.frame}, we can think that $\nu$ is defined on $F$.

In the case that $F$ is the topology $\Omega(X)$ on a set $X$, if $\mathcal{B}\scj \Omega(X)$ is a semilattice, then it is a basis for this topology. We can use the discussion above to impose relations on the elements of this basis and find a sublocale $G\scj \Omega(X)$, which is the image of the nucleus $\nu$ given by (\ref{eq:nucleus}). Restricting the co-domain of $\nu$ to $G$, we get a frame homomorphism surjection $f:\Omega(X)\to G$ \cite[II.2.2]{MR698074}. In the sober case, since there is a homeomorphism $\Phi:\mathrm{pt}(\Omega(X))\to X$, we can define a subspace $Y=\{\Phi(p\circ f)\,|\,p\in\mathrm{pt}(G)\}$. If $G$ is a spatial frame, then it is isomorphic to the induced topology on $Y$ \cite[Chapter~VI]{MR2868166}.

\begin{lemma}\label{lemma:subspace.generated.by.relations}
	In the conditions above, suppose also that $X$ is $T_1$. Let $C$ be the coverage given by $\mathcal{B}$ as in Lemma \ref{lemma:semilattice.generates.frame}, $C'$ the coverage defining the new relations and $\overline{C}=C\cup C'$. Then $Y=\{x\in X\,|\,\{x\}^c\in\cidlg{\overline{C}}\}$.
\end{lemma}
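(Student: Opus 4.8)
The plan is to unwind the definition of $Y$ in terms of the nucleus $\nu$ of~(\ref{eq:nucleus}), and then to use the $T_1$ hypothesis to collapse an \emph{a priori} infinite family of conditions on a point $x$ into a single condition about the open set $\{x\}^c$. Concretely, the frame surjection $f\colon\Omega(X)\to G$ is the quotient of $\Omega(X)$ by the congruence identifying each $U$ with $\nu(U)$, so by the universal property of this quotient \cite[II.2]{MR698074} a frame homomorphism $q\colon\Omega(X)\to\mathbf{2}$ factors (necessarily uniquely, since $f$ is onto) as $q=p\circ f$ with $p\in\mathrm{pt}(G)$ if and only if $q(\nu(U))=q(U)$ for every $U\in\Omega(X)$. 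Since $X$ is sober, every such $q$ has the form $\hat{x}$ with $\hat{x}(U)=[x\in U]$ and $\Phi(\hat{x})=x$; hence $x\in Y$ if and only if, for every $U\in\Omega(X)$, one has $x\in\nu(U)\Leftrightarrow x\in U$, which — as $U\subseteq\nu(U)$ always — is equivalent to the implication $x\in\nu(U)\Rightarrow x\in U$ holding for all $U\in\Omega(X)$.

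Now $T_1$-ness of $X$ is exactly what makes $\{x\}^c$ open, and it is then the largest open set not containing $x$. I claim the condition just obtained is equivalent to $\nu(\{x\}^c)=\{x\}^c$. If $\nu(\{x\}^c)=\{x\}^c$ and $x\notin U$, then $U\subseteq\{x\}^c$, so monotonicity of $\nu$ gives $\nu(U)\subseteq\{x\}^c$, whence $x\notin\nu(U)$. Conversely, specializing the condition to $U=\{x\}^c$ gives $x\notin\nu(\{x\}^c)$; as $\nu(\{x\}^c)$ is then an open set not containing $x$ we get $\nu(\{x\}^c)\subseteq\{x\}^c$, and together with $\{x\}^c\subseteq\nu(\{x\}^c)$ this forces equality. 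Finally, transporting this along the isomorphism $\Omega(X)\cong\cidl$ of Lemma~\ref{lemma:semilattice.generates.frame}: the open set $\{x\}^c$ corresponds to the $C$-ideal $\{b\in\mathcal{B}:x\notin b\}$, the fixed points of $\nu$ are exactly the members of $\cidlg{\overline{C}}$, and hence $\nu(\{x\}^c)=\{x\}^c$ holds precisely when $\{x\}^c\in\cidlg{\overline{C}}$. Chaining the equivalences yields $Y=\{x\in X:\{x\}^c\in\cidlg{\overline{C}}\}$.

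The step needing the most care is the first: threading the four identifications in play — $\Omega(X)$, its space of points, the space $X$ itself, and $\cidl$ — and invoking the right universal property so that ``$\hat{x}$ factors through $f$'' becomes the clean pointwise condition on $\nu$ (one may instead construct the induced $p\in\mathrm{pt}(G)$ by hand, but then one must check that the restriction of $\hat{x}$ along $G\hookrightarrow\Omega(X)$ really is a frame homomorphism, which is where the factoring condition gets used in a non-obvious way, the joins of $G$ being $\nu$ applied to those of $\Omega(X)$). Once that is in place, everything else is elementary; note in particular that the argument never inspects the coverages $C$ or $C'$, using only that $\nu$ is inflationary, monotone and idempotent with fixed-point set $\cidlg{\overline{C}}$ — exactly the information packaged in~(\ref{eq:nucleus}).
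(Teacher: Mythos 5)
Your proof is correct and follows essentially the same route as the paper's: both use the $T_1$ hypothesis to reduce membership in $Y$ to the single condition that $\{x\}^c$ is fixed by the nucleus (equivalently, lies in $\cidlg{\overline{C}}$), and both rest on the same two observations — that $\{x\}^c$ is the largest open set omitting $x$, and that the nucleus preserves meets (hence is monotone). The only cosmetic difference is that you package the two directions through the factorization criterion for quotients by a nucleus, where the paper constructs and verifies the point $p\in\mathrm{pt}(G)$ by hand.
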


\begin{proof}
	We use the fact that in a $T_1$ space, singletons are closed sets.
	
	First suppose that $x=\Phi(p\circ f)$ for some $p\in\mathrm{pt}(G)$. For $U=\{x\}^c$, since $U\scj f(U)$, either $f(U)=U$ or $f(U)=X$. If $f(U)=X$, then $p(f(U))=p(X)=1$, because $X=1_G$ and $p$ is a frame homomorphism. On the other hand $p(f(U))=\Phi^{-1}(x)(U)=[x\in U]=0$. This is a contradiction so that $f(U)=U$, which means that $\{x\}^c=U\in \cidlg{\overline{C}}$.

	Now suppose that $\{x\}^c\in \cidlg{\overline{C}}$ so that $f(\{x\}^c)=\{x\}^c$ and define $p\in\mathrm{pt}(G)$ by $p(U)=[x \in U]$ for $U\in G$. We have to prove that $p(f(U))=[x \in U]$ for all $U\in\Omega(X)$. We know that $U\scj f(U)$. If $x\in U$, then $p(f(U))=[x\in f(U)]=1=[x\in U]$. And if $x\notin U$, then $U=U\cap \{x\}^c$ so that $f(U)=f(U\cap \{x\}^c)=f(U)\cap f(\{x\}^c)=f(U)\cap \{x\}^c$. In this case $p(f(U))=[x\in f(U)]=0=[x\in U]$.
\end{proof}

\begin{remark}\label{remark:subspace}
	Due to Lemma \ref{lemma:semilattice.generates.frame}, observe that the elements of $Y$ given in Lemma \ref{lemma:subspace.generated.by.relations} are exactly those $x\in X$ such that for any $U\in\mathcal{B}$ and any $S\in C'(U)$, $x\notin V$ for all $V\in S$ implies that $x\notin U$, or equivalently, $x\in U$ implies that $x\in V$ for some $V\in S$.
\end{remark}

%
%

\section{The Paterson-Welch topology}\label{section:topology}

Let $\{X_n\}_{n\in\nn^*}$ be a sequence of locally compact $T_1$ sober spaces. In general, the product space $\prod_{n\in\nn^*}X_n$ is not locally compact. By looking at the path spaces of graphs and k-graphs, Paterson and Welch in \cite{MR2146225} defined a topology on the space
\[W=\bigcup_{k\in\nn^*}(X_1\times\cdots\times X_k) \cup \prod_{n\in\nn^*}X_n.\]
They asked the spaces to be locally compact metric spaces and used the Alexandroff extension on each $X_n$ to arrive at a compact space $X_n^{\infty}$. By Tychnoff's theorem $A:=\prod_{n\in\nn^*}X_n^{\infty}$ is compact, and by defining a surjection $Q:A\to W_0$, where $W_0=\{0\}\cup W$ for $0$ an element not belonging to $W$, they gave the quotient topology on $W_0$, which they proved was a compact metric space. By removing $0$, they obtained a locally compact metrizable topology on $W$.

The goal of this section is to arrive at the Paterson-Welch topology on $W$ in a different way and with the weaker hypothesis that the spaces are $T_1$ and sober instead of being metric spaces. Also, since we are interested in the path space of graphs, we will work with a certain subset of $W$. For each $k\in\nn^*$, let $Y_k$ be a closed subspace of $X_1\times\cdots\times X_k$, with the property that if $k\leq l$, then $(x_1,\ldots,x_l)\in Y_l$ implies that $(x_1,\ldots,x_k)\in Y_k$, so that we have well-defined continuous projections $\pi_{k,l}:Y_l\to Y_k$. Notice that each $Y_k$ is also $T_1$, sober and locally compact. We define $Y_{\infty}$ as the set of all sequences $(x_1,x_2,\ldots)\in\prod_{n\in\nn^*}X_n$ such that $(x_1,\ldots,x_k)\in Y_k$ for all $k\in\nn^*$, and finally define
\[Y=\bigcup_{k\in\nn^*}Y_k\cup Y_{\infty}.\]

For $x\in Y$, $\ell(x)$ denotes the length of the sequence $x$, so that $\ell(x)=k$ if $x\in Y_k$ for $k\in \nn^*\cup\{\infty\}$. If $A\scj Y$ is such that $A\scj Y_k$ for some $k\in \nn^*\cup\{\infty\}$, we also write $\ell(A)=k$.

\begin{definition}
	For a set $A\scj Y_k$, $k\in\nn^*$, we define the cylinder set $Z(A)$ as the subset of $Y$ of all finite and infinite sequences such the sequence of the first $k$ coordinates belong to $A$. If $A$ is open, we say that $Z(A)$ is an open cylinder set. Also if $A \scj \bigcup_{k\in\nn^*}Y_k$, we write $Z(A)$ for the union $\bigcup_{k\in\nn^*}Z(A\cap Y_k)$.
\end{definition}

We want to define a topology from the cylinder sets. For that purpose, let $U\scj Y_k$ and $V\scj Y_l$ be open sets and suppose that $k\leq l$. For $A=\pi_{k,l}^{-1}(U)\cap V$, we have that $A$ is open in $Y_l$, and $Z(U)\cap Z(V)=Z(A)$. This means that the cylinder sets of open sets form a basis for a topology on $W$, which we will denote by $\tau_{cyl}$. Notice that the induced topology of $\tau_{cyl}$ on each $Y_k$ coincides with the original topology. Indeed, if $U\scj Y_k$ is open with respect to the original topology, then $U=Z(U)\cap Y_k$ is open on the induced topology; and for $V\scj Y_l$ open, $Z(V)\cap Y_k=\emptyset$ if $l>k$, and $Z(V)\cap Y_k=\pi_{l,k}^{-1}(V)\cap Y_k$ if $l\leq k$, so that $Z(V)\cap Y_k$ is open on the original topology.

Let us call $E$ the semilattice of open cylinder sets. In general, the frame freely generated by $E$ would, in a certain way, lose the topology on each $Y_k$, for example, we would not observe the relation $Z(U)\cup Z(V)=Z(U\cup V)$. For each $U\scj Y_k$, we then consider the covering $\{Z(U_\lambda)\}_{\lambda\in\Lambda}$ of $Z(U)$, where each $U_{\lambda}$ is an open subset of $Y_k$ and $U=\bigcup_{\lambda\in\Lambda}U_{\lambda}$. Using the previous paragraph, it is not difficult to check that these coverings satisfy the meet-stability condition so that we have a coverage $C$ on $E$.

\begin{proposition}
	Let $(E,C)$ be the site defined from the above coverage. Then the frame freely generated by $(E,C)$ is isomorphic to $\tau_{cyl}$.
\end{proposition}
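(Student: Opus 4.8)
The plan is to realize the frame freely generated by $(E,C)$ — which, by the discussion preceding Lemma~\ref{lemma:semilattice.generates.frame}, is the frame $\cidl$ of $C$-ideals — as the frame of ideals of the \emph{larger} coverage that Lemma~\ref{lemma:semilattice.generates.frame} attaches to a generating semilattice. Concretely, $E$ is a sub-meet-semilattice of the frame $\tau_{cyl}$ (its meets are intersections, and it contains the top $Z(Y_1)=Y$), and it is by construction a basis of $\tau_{cyl}$, so it generates $\tau_{cyl}$ in the sense required by Lemma~\ref{lemma:semilattice.generates.frame}. Let $C_0$ denote the coverage furnished by that lemma, i.e.\ $C_0(Z(U))$ is the set of all $B\scj{\downarrow}Z(U)\cap E$ with $\bigvee B=Z(U)$; the lemma gives $\tau_{cyl}\cong\cidlg{C_0}$. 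It therefore suffices to show that $C$ and $C_0$ determine the same ideals, i.e.\ $\cidl=\cidlg{C_0}$ as collections of subsets of $E$.

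One inclusion is immediate: any $\{Z(U_\lambda)\}_{\lambda}\in C(Z(U))$ satisfies $Z(U_\lambda)\scj Z(U)$ and $\bigcup_\lambda Z(U_\lambda)=Z\bigl(\bigcup_\lambda U_\lambda\bigr)=Z(U)$, hence $\{Z(U_\lambda)\}_\lambda\in C_0(Z(U))$. Consequently every $C_0$-ideal is a $C$-ideal, and $\cidlg{C_0}\scj\cidl$.

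For the converse I would use the stratified structure of $Y$. Fix a $C$-ideal $I$, an element $Z(U)\in E$ with $U\scj Y_k$ open, and a cover $S\in C_0(Z(U))$ with $S\scj I$; the goal is $Z(U)\in I$, and we may discard the empty member of $S$ if present. Since each $Y_j$ sits inside $Y$ precisely as the set of length-$j$ elements, a nonempty $Z(V)\scj Z(U)$ forces $\ell(V)\geq k$ (otherwise $Z(V)$ contains an element of length $\ell(V)<k$, impossible inside $Z(U)$); moreover the length-$k$ elements of $Z(U)$ are exactly the points of $U$, and a nonempty $Z(V)\in S$ contains such an element iff $\ell(V)=k$, contributing exactly $V$ in that case. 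Intersecting $\bigcup S=Z(U)$ with $Y_k$ then yields $\bigcup\{V:Z(V)\in S,\ \ell(V)=k\}=U$, so the subfamily $S':=\{Z(V)\in S:\ell(V)=k\}$ consists of open cylinder sets over open subsets of the \emph{single} space $Y_k$ whose first-coordinate sets have union $U$; that is, $S'\in C(Z(U))$. As $S'\scj S\scj I$ and $I$ is a $C$-ideal, $Z(U)\in I$. Hence $I$ is a $C_0$-ideal, $\cidl\scj\cidlg{C_0}$, and combining with the previous paragraph, $\cidl=\cidlg{C_0}\cong\tau_{cyl}$.

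The step I expect to be the crux is the last observation: that the length-$k$ members of an arbitrary open cover of $Z(U)$ already cover $Z(U)$. This is exactly what makes the deliberately small coverage $C$ — which only allows same-length refinements — lose no information, and it relies essentially on the finite-path strata $Y_k$ being present inside $Y$; everything else is routine bookkeeping with cylinder sets.
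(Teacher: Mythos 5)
Your proof is correct and follows essentially the same route as the paper: both reduce the statement, via Lemma~\ref{lemma:semilattice.generates.frame}, to showing that $\cidl$ coincides with the ideals of the full coverage, with the trivial inclusion from $C\scj C_0$ and the converse obtained by observing that in any cylinder cover of $Z(U)$ only the members of the same length $\ell(U)$ can meet $U\scj Y_k$, so they already form a $C$-cover. The only cosmetic difference is that you extract the same-length subfamily by intersecting with the stratum $Y_k$, where the paper notes that longer cylinders miss $U$ entirely; these are the same observation.
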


\begin{proof}
	Let $\overline{C}$ be the coverage on $E$ as in Lemma \ref{lemma:semilattice.generates.frame} for $\tau_{cyl}$ with the basis $E$. From this lemma and its proof, it is sufficient to show that $\cidl=\cidlg{\overline{C}}$. Clearly for all $Z(U)\in E$, we have that $C(Z(U))\scj\overline{C}(Z(U))$ and hence $\cidlg{\overline{C}}\scj \cidl$.
	
	Now let $I\in\cidl$ and suppose that $I$ contains a $\overline{C}$-covering $\{Z(V_\lambda)\}_{\lambda\in\Lambda}$ of $Z(U)\in E$, that is $Z(U)=\bigcup_{\lambda\in\Lambda}Z(V_{\lambda})$. Observe that $\ell(V_\lambda)\geq \ell(U)$ for all $\lambda\in\Lambda$ and if $\ell(V_\lambda) > \ell(U)$, then $U\scj Z(U)\cap Z(V_\lambda)^c$. So, if we take $\Lambda'=\{\lambda\in\Lambda\,|\,\ell(V_\lambda)=\ell(U)\}$, we have that $U=\bigcup_{\lambda\in\Lambda'} V_{\lambda}$ and $\{Z(V_\lambda)\}_{\lambda\in\Lambda'}$ is a $C$-covering of $Z(U)$. Since $I\in\cidl$, there exists $\lambda_0\in\Lambda'$ such that $Z(V_{\lambda_0})\in I$, but this implies that $I\in\cidlg{\overline{C}}$.
\end{proof}

The topology $\tau_{cyl}$, in general, is not $T_1$ even if the original spaces are, because we cannot separate a sequence from any of its beginnings. We have the following result, however.

\begin{proposition}
	The space $(Y,\tau_{cyl})$ is sober.
\end{proposition}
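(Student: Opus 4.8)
The plan is to show that every nonempty irreducible closed subset of $(Y,\tau_{cyl})$ has a unique generic point. Equivalently, using the description of sobriety in terms of points of the frame, I want to show that every frame homomorphism $p:\tau_{cyl}\to\mathbf{2}$ is of the form $U\mapsto [x\in U]$ for a unique $x\in Y$. Since $\tau_{cyl}$ has $E$, the semilattice of open cylinder sets, as a basis, such a $p$ is determined by its restriction to $E$, i.e.\ by the family $\mathcal{F}_p=\{Z(U)\in E\,|\,p(Z(U))=1\}$; this family is a proper filter in $E$ (it is upward closed, closed under the meets $Z(U)\cap Z(V)$, and does not contain $\emptyset$), and the covering relation $p(Z(U))=\bigvee_\lambda p(Z(U_\lambda))$ forces: whenever $U=\bigcup_\lambda U_\lambda$ inside some $Y_k$ and $Z(U)\in\mathcal F_p$, then $Z(U_\lambda)\in\mathcal F_p$ for some $\lambda$ (a ``completely prime'' / point-like condition at each level).

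First I would analyze $\mathcal F_p$ level by level. For each $k\in\nn^*$ the sets $\{U\scj Y_k \text{ open} \,|\, Z(U)\in\mathcal F_p\}$ form a completely prime filter on the frame $\Omega(Y_k)$ (complete primeness is exactly the covering condition just stated, together with the observation from the text that a refinement of the cover by the $U_\lambda$'s does not enlarge $\mathcal F_p$). Since $Y_k$ is sober, either this filter is empty or it is the neighborhood filter of a unique point $x^{(k)}\in Y_k$. Because $Z(U)\cap Y_m=\emptyset$ when $\ell(U)>m$, and because $Z(U)=Z(\pi_{k,l}^{-1}(U))$-type identities relate consecutive levels, the points $x^{(k)}$ that occur must be compatible under the projections $\pi_{k,l}$, i.e.\ $\pi_{k,l}(x^{(l)})=x^{(k)}$ whenever both are defined.

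Now I distinguish two cases. If the level-$k$ filter is nonempty for every $k$, then the compatible family $(x^{(k)})_k$ defines a point $x\in Y_\infty$ with $(x_1,\dots,x_k)=x^{(k)}$ for all $k$; one then checks $Z(U)\in\mathcal F_p \iff x\in Z(U)$ for every basic open $Z(U)$, first for $U\scj Y_k$ with $Z(U)\in\mathcal F_p$ (then $x^{(k)}\in U$, so $x\in Z(U)$) and conversely (if $x\in Z(U)$ then $x^{(k)}\in U$, and $U$ lies in the level-$k$ neighborhood filter, so $Z(U)\in\mathcal F_p$), and this extends to arbitrary opens by taking joins. If instead the level-$k$ filter is empty for some $k$, let $k_0$ be the largest $k$ for which it is nonempty (it must be nonempty for $k=$ small enough since $p$ is not the zero map — here I use that $p(Y)=1$ and that $Y=\bigcup_m Z(Y_m)$ as a cover, forcing some $Z(Y_m)\in\mathcal F_p$, hence $Y_m\neq\emptyset$ is in the level-$m$ filter, and the compatibility pushes nonemptiness down to all smaller levels). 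Then $x:=x^{(k_0)}\in Y_{k_0}$ is a finite path, and I claim $\mathcal F_p=\{Z(U)\,|\,x\in Z(U)\}$: the inclusion $\supseteq$ follows since $U$ in the level-$k_0$ neighborhood filter gives $Z(U)\in\mathcal F_p$, together with the cylinder identities lifting this to cylinders of shorter paths; for $\subseteq$, if $Z(U)\in\mathcal F_p$ with $\ell(U)=m$, then $m\le k_0$ (the level-$m$ filter is nonempty, and by maximality $m\le k_0$), and pulling $U$ back via $\pi_{m,k_0}^{-1}$ puts $x^{(k_0)}\in\pi_{m,k_0}^{-1}(U)$, i.e.\ $x\in Z(U)$. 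Uniqueness of $x$ in both cases is immediate from $T_1$-ness of each $Y_k$ (distinct points of $Y$ are separated by some basic cylinder on which $p$ takes differing values), or rather from the fact that $x$ is recovered from $\mathcal F_p$.

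The main obstacle I anticipate is the bookkeeping around the ``truncation'' phenomenon that makes $\tau_{cyl}$ fail to be $T_1$: a finite path $x\in Y_{k_0}$ and any extension of it lie in exactly the same cylinders $Z(U)$ with $\ell(U)\le k_0$, so I must be careful that the generic point is the \emph{shortest} such path, which is precisely why I take $k_0$ maximal among nonempty levels — if the level-$k$ filters never terminate we land in $Y_\infty$, and if they terminate at $k_0$ we land at the unique point of $Y_{k_0}$ cut out by the neighborhood filter there. The verification that ``largest nonempty level'' is well-defined, i.e.\ that once the level-$k$ filter is empty it stays empty for all larger $k$, uses compatibility under the $\pi_{k,l}$ together with surjectivity-type behavior of these projections onto the relevant subspaces; this, and the careful matching of $\mathcal F_p$ with the neighborhood filter of $x$ across all levels simultaneously, is where the real work lies. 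Everything else is a routine unwinding of the basis $E$ and the coverage $C$.
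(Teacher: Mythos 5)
Your proposal is correct and takes essentially the same route as the paper: the level-$k$ completely prime filters are exactly the paper's frame homomorphisms $p_k(U)=p(Z(U))$ on $\Omega(Y_k)$, the case split on whether the filters terminate at some maximal level $k_0$ or persist to give a compatible family in $Y_\infty$ is the paper's dichotomy on $p(Z(Y_k))$, and the remaining verifications (monotonicity forcing emptiness to propagate upward, the identity $Z(U)\cap Z(Y_{k_0})=Z(\pi_{m,k_0}^{-1}(U))$, and $T_0$/$T_1$ separation for uniqueness) are the same computations.
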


\begin{proof}
	We have to prove that the map $\Phi:Y\to \mathrm{pt}(\tau_{cyl})$, given by $\Phi(y)(A)=[y\in A]$ for $A\in\tau_{cyl}$, is a bijection.
	
	We start proving the surjectivity of $\Phi$. Let $p\in\mathrm{pt}(\tau_{cyl})$ and notice that since $p$ preserves order if $p(Z(Y_k))=1$ for some $k\in\nn^*$, then $p(Z(Y_l))=1$ for all $l\leq k$. Suppose first that there exists a maximum $k\in\nn$ such that $p(Z(Y_k))=1$ and $p(Z(Y_{k+1}))=0$. Observe that the map $p_k:\Omega(Y_k)\to \mathbf{2}$ given by $p_k(U)=p(Z(U))$ is a frame homomorphism, and, since $Y_k$ is sober, there exists $y\in Y_k$ such that $p_k(U)=[y\in U]$. We claim that $p(A)=[y\in A]$ for all $A\in\tau_{cyl}$. Notice that because every element in $\tau_{cyl}$ is the union of open cylinder sets, it is sufficient to consider $A$ of the form $Z(V)$ for some $V\scj Y_l$ with $l\in\nn^*$. If $l>k$, then $p(Z(V))=0$ by the definition of $k$ and due to the choice of the family $\{Y_n\}_{n\in\nn^*}$. Also, in this case $y\notin Z(V)$ because every element of $Z(V)$ has length greater or equal to $l$. Now, if $l\leq k$, we have the following:
	\[[y\in Z(V)]= [y\in Z(V)\cap Z(Y_k)]=[ y\in Z(\pi_{k,l}^{-1}(V)\cap Y_k)]= [y\in \pi_{k,l}^{-1}(V)\cap Y_k] =\]
	\[p_k(\pi_{k,l}^{-1}(V)\cap Y_k)= p(Z(\pi_{k,l}^{-1}(V)\cap Y_k))= p(Z(V)\cap Z(Y_k))=\]
	\[p(Z(V))\wedge p(Z(Y_k))= p(Z(V))\wedge 1= p(Z(V)).\]
	
	Now, suppose that $p(Z(Y_k))=1$ for all $k\in\nn^*$. Using the map $p_k$ as above for each $k\in\nn^*$, we find $y_k\in Y_k$ such that $p_k(U)=[y_k\in U]$ for all open subset $U\scj Y_k$. We claim that if $k\leq l$, then $\pi_{k,l}(y_l)=y_k$. Indeed, for an arbitrary open subset $U\scj Y_k$,
	\[[\pi_{k,l}(y_l)\in U]=[y_l \in \pi_{k,l}^{-1}(U)]=p_l(\pi_{k,l}^{-1}(U))=p(Z(\pi_{k,l}^{-1}(U)))=\]
	\[p(Z(U)\cap Z(Y_l))=p(Z(U))\wedge p(Z(Y_l))=p(Z(U))=p_k(U)=[y_k\in U],\]
	and since $Y_k$ is sober, this implies that $\pi_{k,l}(y_l)=y_k$. By the definition of $Y_{\infty}$, there exists $y=(x_1,x_2\ldots)\in Y_{\infty}$ such that $y_k=(x_1,\ldots,x_k)$ for all $k\in\nn^*$. Now, for arbitrary $k\in\nn^*$ and arbitrary $V\scj Y_k$ open
	\[[y\in Z(V)]=[y_k\in V]=p_k(V)=p(Z(V)),\]
	which implies that $p(A)=[y\in A]$ for all $A\in \tau_{cyl}$.
	
	To prove the injectivity of $\Phi$, let $x,y\in Y$ be given and suppose that $x\neq y$. If $\ell(x)<\ell(y)$, then $[x\in Z(Y_{\ell(y)})]=0\neq 1=[y\in Z(Y_{\ell(y)})]$. Analogously if $\ell(x)>\ell(y)$. If $\ell(x)=\ell(y)=k\in\nn^*$, then using that $Y_k$ is sober, we find $U\scj Y_k$ open such that $[x\in Z(U)]=[x\in U]\neq [y\in U]=[y\in Z(U)]$. Finally, if $\ell(x)=\ell(y)=\infty$, since $x\neq y$, there exists $k\in\nn^*$ such that the k-th coordinates are different, and as before, this implies that there exists $U\scj Y_k$ such that $[x\in Z(U)]\neq [y\in Z(U)]$.
\end{proof}

Motivated by by Lawson and Lenz \cite{MR3077869}, we consider the patch topology on $(Y,\tau_{cyl})$. We recall some needed definitions.

\begin{definition}
	Let $X$ be a topological space. We say that a subset $A\scj X$ is saturated if $A$ is the intersection of all open subsets of $X$ containing $A$.
\end{definition}

Observe that if $X$ is $T_1$, since all singletons are closed, every subset of $A\scj X$ is saturated, indeed $A=\bigcap_{x\in A^c}\{x\}^c$.

\begin{definition}
	Let $X$ be a topological space. A cocompact subset of $X$ is the complement of a compact saturated set. The cocompact topology on $X$ is the topology generated by the cocompact subsets of $X$. The patch topology on $X$ is the coarsest topology containing both the original topology and the cocompact topology.
\end{definition}

We will denote the patch topology on $(Y,\tau_{cyl})$ by $\tau_{patch}$. To find a basis for the patch topology $\tau_{patch}$, we need a few lemmas.

\begin{lemma}\label{lemma:compact.saturated.cylinder}
	If $K\scj Y_k$ is compact, then $Z(K)$ is compact and saturated with respect to $\tau_{cyl}$.
\end{lemma}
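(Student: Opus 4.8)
The plan is to establish the two assertions separately. For compactness of $Z(K)$, I would exhibit $Z(K)$ as a continuous image, or more precisely use a projection-type argument: define the map $p_k:Y\to Y_k\cup\{\ast\}$ sending a path $x$ with $\ell(x)\geq k$ to its first $k$ coordinates and any shorter path to a formal point $\ast$. However, continuity of such a collapsing map into a one-point extension is the kind of thing one wants to avoid, so instead I would argue directly with covers. Let $\{Z(U_\lambda)\}_{\lambda\in\Lambda}$ together with finitely-many-intersections thereof form a basic open cover of $Z(K)$; since every basic open set meeting $Z(K)$ is of the form $Z(V)$ with $V\subseteq Y_l$ open and $l$ arbitrary, and since $Z(V)\cap Z(K)=\emptyset$ whenever $l>k$ (every element of $Z(V)$ has length $\geq l>k$, but elements of $Z(K)\cap Y_k$ have length exactly $k$ — wait, $Z(K)$ also contains longer paths), I need to be more careful: intersect the cover with $Y_k$. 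Concretely, $Z(K)\cap Y_k = K$, and if $\{Z(V_\lambda)\}$ covers $Z(K)$ then $\{Z(V_\lambda)\cap Y_k\}$ covers $K$; each such set is open in $Y_k$ (as computed in the paragraph defining $\tau_{cyl}$, $Z(V)\cap Y_k$ is $\pi_{l,k}^{-1}(V)\cap Y_k$ when $l\leq k$ and empty otherwise), so by compactness of $K$ in $Y_k$ finitely many $Z(V_{\lambda_1}),\ldots,Z(V_{\lambda_n})$ already cover $K$. The key observation is then that $Z(K)=Z(K\cap Y_k)$ equals $Z$ of the set covered, i.e. a path lies in $Z(K)$ iff its first $k$ coordinates lie in $K$, so a finite subfamily covering $K$ automatically covers $Z(K)$: if $x\in Z(K)$ then $(x_1,\ldots,x_k)\in K\subseteq\bigcup_{i} (Z(V_{\lambda_i})\cap Y_k)$, hence $(x_1,\ldots,x_k)\in Z(V_{\lambda_i})$ for some $i$, which (since membership in $Z(V_{\lambda_i})$ depends only on the first $\ell(V_{\lambda_i})\leq k$ coordinates) gives $x\in Z(V_{\lambda_i})$. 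This proves $Z(K)$ compact.

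For saturation with respect to $\tau_{cyl}$, I would recall from the excerpt (the remark after the definition of ``saturated'') that it suffices to show $Z(K)$ is an intersection of $\tau_{cyl}$-open sets. Since $Y_k$ is $T_1$ and locally compact and $K$ is compact, $K$ is saturated in $Y_k$, i.e. $K=\bigcap\{U : U\supseteq K,\ U\ \text{open in}\ Y_k\}$ — in fact, as $Y_k$ is $T_1$, even $K=\bigcap_{y\notin K}\{y\}^c$, but the open-set version is what I want. Then I claim $Z(K)=\bigcap\{Z(U) : U\supseteq K \text{ open in } Y_k\}$. One inclusion is clear since $K\subseteq U$ implies $Z(K)\subseteq Z(U)$. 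For the reverse, if $x\notin Z(K)$ then either $\ell(x)<k$, in which case $x\notin Z(Y_k)$ and $Z(Y_k)$ is one of the sets in the intersection (take $U=Y_k$); or $\ell(x)\geq k$ and $(x_1,\ldots,x_k)\notin K$, so there is an open $U\supseteq K$ in $Y_k$ with $(x_1,\ldots,x_k)\notin U$, whence $x\notin Z(U)$. Thus $x$ is excluded from the intersection, proving equality, and $Z(K)$ is saturated.

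I expect the main obstacle to be purely bookkeeping: keeping straight the interaction between the ``length'' bookkeeping (elements of $Z(V)$ having length $\geq\ell(V)$) and the reductions to the single fibre $Y_k$, and making sure the compactness argument does not secretly need the cover to consist only of cylinders over $Y_k$. The cleanest way to neutralize this is the remark already used in the proof of the previous proposition: every $\tau_{cyl}$-open set is a union of open cylinder sets, so without loss of generality a cover of $Z(K)$ may be taken to consist of open cylinder sets $Z(V_\lambda)$, and then intersecting with $Y_k$ does all the work. No genuinely hard point arises; the content is entirely that $Z(-)$ commutes with the relevant operations and that compactness/saturation are inherited from $Y_k$ along the ``first $k$ coordinates'' map.
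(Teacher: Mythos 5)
Your proposal is correct and follows essentially the same route as the paper: reduce a cover of $Z(K)$ to open cylinder sets, intersect with $Y_k$ to extract a finite subcover of $K$ consisting of cylinders $Z(V_{\lambda_i})$ with $\ell(V_{\lambda_i})\leq k$ (which then automatically cover $Z(K)$), and for saturation use that $K$ is saturated in the $T_1$ space $Y_k$ together with $Z(K)=\bigcap\{Z(U):K\scj U\text{ open in }Y_k\}$. The extra detail you give on the reverse inclusion in the saturation step is a fine elaboration of what the paper leaves implicit.
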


\begin{proof}
	Let $\{Z(U_\lambda)\}_{\lambda\in \Lambda}$	be a family of open cylinder sets that covers $Z(K)$. Since, $K\scj Z(K)$, the family also covers $K$, but this implies that $\{Z(U_{\lambda})\cap Y_k\}_{\lambda\in \Lambda}$ is an open covering of $K$ in $Y_k$. Since $K$ is compact, there is a finite subcover $\{Z(U_{\lambda_i})\cap Y_k\}_{i=1}^n$ of $K$. Observe that if $Z(U_{\lambda_i})\cap Y_k$ is not empty then $\ell(U_{\lambda_i})\leq k$, which we can suppose it is true for all $i=1,\ldots,n$. It follows that $\{Z(U_{\lambda_i})\}_{i=1}^n$ is a cover of $Z(K)$ and hence $Z(K)$ is compact.
	
	To prove that $Z(K)$ is saturated, we use that $Y_k$ is $T_1$ and hence $K$ is saturated in $Y_k$. If $\mathcal{U}$ is the family of open sets of $Y_k$ containing $K$, then $K=\bigcap_{U\in\mathcal{U}}U$ so that $Z(K)=\bigcap_{U\in\mathcal{U}}Z(U)$, and therefore $Z(K)$ is saturated. 
\end{proof}

\begin{lemma}\label{lemma:compact.saturated.as.union}
	If $\til{K}$ is a compact saturated subset of $Y$ with respect to $\tau_{cyl}$ and $w\in Y\setminus\til{K}$, then there exist compact sets $K_i \scj  Y_{k_i}$ for $i=1,\ldots,n$ and some $k_i\in \nn^*$ such that $\til{K}\scj \bigcup_{i=1}^n Z(K_i)$ and $w\notin\bigcup_{i=1}^n Z(K_i)$.
\end{lemma}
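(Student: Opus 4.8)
The plan is to start from the fact that $\til K$ is saturated with respect to $\tau_{cyl}$, so $\til K$ is the intersection of all open cylinder-set unions containing it; since $w\notin\til K$, there is some basic open set $Z(U)$ (with $U\subseteq Y_k$ open for some $k\in\nn^*$) such that $\til K\subseteq Z(U)$ but $w\notin Z(U)$ — actually one needs $w$ to be excluded, so first I would pick an open $\mathcal O\in\tau_{cyl}$ with $\til K\subseteq\mathcal O$ and $w\notin\mathcal O$, and write $\mathcal O=\bigcup_\lambda Z(U_\lambda)$ as a union of open cylinder sets. By compactness of $\til K$, finitely many $Z(U_1),\ldots,Z(U_m)$ already cover $\til K$, with $U_j\subseteq Y_{k_j}$ open. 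Replacing each $U_j$ by $U_j\setminus(\text{the part whose cylinder contains }w)$ is not quite legitimate since we need the $U_j$ open and we need to avoid $w$; instead, note $w\notin Z(U_j)$ for those $j$ is not guaranteed, so I would intersect: for the index $j$ with $w\notin Z(U_j)$ keep it, but in general one shrinks $\mathcal O$ first so that $w\notin\mathcal O$, hence $w\notin Z(U_j)$ for every $j$.

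Now the real work: each $Z(U_j)$ with $U_j\subseteq Y_{k_j}$ open must be replaced from the inside by a finite union of $Z(K)$'s with $K$ compact, in such a way that the union over $j$ still covers $\til K$ and still avoids $w$. The idea is local compactness of $Y_{k_j}$: for each point $y\in\til K\cap Y_{k_j}$ lying in $U_j$ (more precisely, each $y\in\til K$ with $\ell(y)\ge k_j$ whose truncation $\pi_{k_j,\ell(y)}(y)$, or $y$ itself if $\ell(y)=k_j$, lies in $U_j$), pick a compact neighborhood inside $U_j$ of the relevant truncation. Here I would use that $\til K$, being compact in $\tau_{cyl}$, is covered by the $Z(U_j)$; for a point $w'\in\til K$ choose the $j$ with $w'\in Z(U_j)$, let $p=\ell(w')\wedge k_j=k_j$ actually $p=k_j\le\ell(w')$, so the first $k_j$ coordinates $\bar w'$ of $w'$ lie in $U_j\subseteq Y_{k_j}$; by local compactness of $Y_{k_j}$ there is a compact saturated (using $T_1$) neighborhood $K_{w'}\subseteq U_j$ of $\bar w'$ in $Y_{k_j}$, and then $Z(K_{w'})$ is a $\tau_{cyl}$-compact saturated neighborhood of $w'$ contained in $Z(U_j)$ by Lemma \ref{lemma:compact.saturated.cylinder}. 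Since $w\notin Z(U_j)\supseteq Z(K_{w'})$ we keep $w\notin Z(K_{w'})$. These $\mathrm{int}\,Z(K_{w'})$ form an open cover of the compact set $\til K$, so finitely many $Z(K_1),\ldots,Z(K_n)$ (each $K_i$ compact in some $Y_{k_i}$) cover $\til K$, and none contains $w$; taking their union finishes the proof.

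The step I expect to be the main obstacle is the passage from "$\til K$ is covered by $\tau_{cyl}$-open sets avoiding $w$" to a cover by sets of the special form $Z(K)$ with $K$ compact in a single $Y_k$: one must be careful that the open cylinder sets $Z(U_j)$ appearing in a basic cover can have different lengths $k_j$, that a single path $w'\in\til K$ may have infinite length while the cylinder witnessing $w'\in Z(U_j)$ only constrains finitely many coordinates, and that shrinking $U_j$ to a compact neighborhood must be done at the level $Y_{k_j}$ and then transported via $Z(\cdot)$ — all of which is handled by Lemma \ref{lemma:compact.saturated.cylinder} together with the local compactness and $T_1$-ness of each $Y_k$, but requires threading these facts together correctly. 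A secondary point to check is the very first reduction: that saturatedness of $\til K$ in $\tau_{cyl}$ genuinely yields a single open $\mathcal O\supseteq\til K$ with $w\notin\mathcal O$, which is exactly the statement that $\til K=\bigcap\{\mathcal O\in\tau_{cyl}:\til K\subseteq\mathcal O\}$ applied to the point $w\notin\til K$.
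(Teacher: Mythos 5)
Your proposal is correct and follows essentially the same route as the paper's proof: use saturatedness to get a single $\tau_{cyl}$-open set containing $\til{K}$ and avoiding $w$, extract a finite cover by open cylinder sets $Z(U_j)$ (each automatically avoiding $w$), then use local compactness of each $Y_{k_j}$ to shrink to compact neighborhoods whose cylinders' interiors cover $\til{K}$, and finish by compactness again. The points you flag as potential obstacles are handled exactly as you describe, via Lemma \ref{lemma:compact.saturated.cylinder} and local compactness of the $Y_k$.
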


\begin{proof}
	Since $\til{K}$ is saturated, there exists an open set $\til{U}\scj Y$ containing $\til{K}$ and such that $w\notin\til{U}$. We write $\til{U}$ as an union of open cylinder sets and use that $\til{K}$ is compact to find open cylinder sets $Z(U_1),\ldots,Z(U_m)$ such that $\til{K}\scj \bigcup_{i=1}^m Z(U_i)$ and $w\notin Z(U_i)$ for all $i=1,\ldots,m$. Define $k_i=\ell(U_i)$ for each $i=1,\ldots,m$.
	
	Now, since each $Y_k$ is locally compact, for each $i=1,\ldots,m$, and each $x\in Z(U_i)$, there exists a compact neighbourhood $K_{x,i}$ of $(x_1,\ldots,x_{l_i})\in Y_{l_i}$ with $K_{x,i}\scj U_i$, so that $x\in Z(\text{int}(K_{x,i}))$. This means that the family of open cylinder sets $\{Z(\text{int}(K_{x,i}))\}$ covers $\til{K}$ and so it admits a finite subcover. The corresponding compact sets $K_1,\ldots,K_n$, with corresponding indices $k_1,\ldots,k_n$, are the wanted sets.
\end{proof}

Notice that for compact sets $K_i \scj  Y_{k_i}$, $i=1,\ldots,n$, the union $\bigcup_{i=1}^n K_i$ is a compact subset of $\bigcup_{k\in\nn^*}Y_k$ with the disjoint union topology. Reciprocally, every compact subset of $\bigcup_{k\in\nn^*}Y_k$ is of this form.

\begin{proposition}\label{prop:basis.patch.topology}
	The sets $Z(U)\cap Z(K)^c$ for an open $U\scj Y_k$ and a compact $K\scj \bigcup_{k\in\nn^*}Y_k$ form a basis for the patch topology $\tau_{patch}$.
\end{proposition}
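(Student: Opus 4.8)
The plan is to show the proposed sets both lie in $\tau_{patch}$ and generate it, by checking they form a basis in the usual sense: every patch-open set is a union of such sets, and finite intersections of such sets are again unions of such sets.

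First I would verify that each $Z(U)\cap Z(K)^c$ is patch-open. Here $Z(U)$ is $\tau_{cyl}$-open by definition, and by Lemma \ref{lemma:compact.saturated.cylinder} together with the remark following Lemma \ref{lemma:compact.saturated.as.union}, a compact $K\scj\bigcup_{k\in\nn^*}Y_k$ decomposes as $K=\bigcup_{i=1}^n K_i$ with $K_i\scj Y_{k_i}$ compact, so $Z(K)=\bigcup_{i=1}^n Z(K_i)$ is a finite union of compact saturated sets, hence compact saturated; thus $Z(K)^c$ is cocompact and the intersection is patch-open. Next I would check closure under finite intersection: $\bigl(Z(U)\cap Z(K)^c\bigr)\cap\bigl(Z(V)\cap Z(L)^c\bigr)=\bigl(Z(U)\cap Z(V)\bigr)\cap\bigl(Z(K)\cup Z(L)\bigr)^c$. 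Since $E$ is a semilattice, $Z(U)\cap Z(V)=Z(W)$ for a suitable open $W\scj Y_m$ (with $m=\max(\ell(U),\ell(V))$, using $W=\pi_{k,l}^{-1}(U)\cap V$ as in the construction of $\tau_{cyl}$), and $Z(K)\cup Z(L)=Z(K\cup L)$ with $K\cup L$ compact in $\bigcup_k Y_k$. So the intersection is again of the required form, and the family is closed under finite intersections.

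The main work is to show these sets generate $\tau_{patch}$, i.e. that every patch-open set is a union of sets $Z(U)\cap Z(K)^c$. Since $\tau_{patch}$ is generated by $\tau_{cyl}$ together with the cocompact sets, and the candidate family is closed under finite intersection, it suffices to show that each subbasic patch-open set is a union of candidate sets. The $\tau_{cyl}$-open sets are unions of open cylinder sets $Z(U)=Z(U)\cap Z(\emptyset)^c$, so those are fine. For a cocompact set $\til{K}^c$ with $\til{K}\scj Y$ compact saturated, let $w\in\til{K}^c$; by Lemma \ref{lemma:compact.saturated.as.union} there are compact $K_i\scj Y_{k_i}$ with $\til{K}\scj\bigcup_{i=1}^n Z(K_i)=Z(K)$ where $K=\bigcup_i K_i$ is compact in $\bigcup_k Y_k$, and $w\notin Z(K)$. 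Then $w\in Z(Y_1)\cap Z(K)^c\scj \til{K}^c$ is a candidate neighbourhood of $w$ inside $\til{K}^c$ (here I use that $Z(Y_1)=Y$ is $\tau_{cyl}$-open and contains everything, so $Z(Y_1)\cap Z(K)^c=Z(K)^c\scj\til K^c$). Since this works for every $w$, $\til{K}^c$ is a union of candidate sets.

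I expect the main obstacle to be the careful bookkeeping around lengths when manipulating cylinder sets: ensuring that $Z(U)\cap Z(V)$ really is an open cylinder set of the form $Z(W)$, that $Z(K)\cup Z(L)=Z(K\cup L)$ with the union compact in the disjoint-union space, and that the decomposition $Z(K)=\bigcup_i Z(K_i)$ from Lemma \ref{lemma:compact.saturated.as.union} is genuinely available — all of which are routine given the earlier lemmas but need to be invoked in the right order. A secondary point to be careful about is that $Z(\emptyset)=\emptyset$ is itself compact (and trivially saturated), so that plain open cylinder sets are covered by allowing $K=\emptyset$; this makes the verification uniform.
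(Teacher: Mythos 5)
Your proposal is correct and follows essentially the same route as the paper: closure under finite intersections via $Z(K)^c\cap Z(L)^c=Z(K\cup L)^c$, recovering plain cylinder sets with $K=\emptyset$, and recovering cocompact sets with $U=Y_1$ via Lemmas \ref{lemma:compact.saturated.cylinder} and \ref{lemma:compact.saturated.as.union}. The only difference is that you spell out a little more explicitly why each $Z(U)\cap Z(K)^c$ is patch-open, which the paper leaves as an observation.
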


\begin{proof}
	As seen before, the open cylinder sets are closed under intersections. The same is true for the sets $Z(K)^c$ for $K\scj \bigcup_{k\in\nn^*}Y_k$ compact, since, if $L$ is also a compact subset of $\bigcup_{k\in\nn}Y_k$, then so is $K\cup L$, and $Z(K)^c\cap Z(L)^c=(Z(K)\cup Z(L))^c=Z(K\cup L)^c$. This implies that the sets of the form $Z(U)\cap Z(K)^c$ is a basis for some topology $\tau$ on $Y$.
	
	By choosing $K=\emptyset$, we see that $\tau$ is finer then $\tau_{cyl}$. And by choosing $U=Y_1$, and using Lemmas \ref{lemma:compact.saturated.cylinder} and \ref{lemma:compact.saturated.as.union}, we see that $\tau$ is finer than the cocompact topology. By definition, $\tau$ is finer then the patch topology.
	
	On the other hand, the sets $Z(U)\cap Z(K)^c$ are open in the patch topology, and so $\tau_{patch}$ must coincide with $\tau$.
\end{proof}

\begin{remark}\label{remark:open.basic.set}
	For $U\scj Y_k$ open and $K\scj Y_l$, notice that if $l\leq k$, then $Z(U)\cap Z(K)^c=Z(U\cap \pi_{l,k}^{-1}(K^c))$. Hence, we can assume that, in the above proposition, every $K$ is the union of compacts $K_i\scj Y_{l_i}$, $i=1,\ldots,n$ where $l_i>k$ for all $i=1,\ldots,n$.
\end{remark}

One interesting result Paterson and Welch obtained in \cite{MR2146225} is that a sequence in $\prod_{n\in\nn}X_n$ could converge to an element in a finite product $X_1\times\cdots\times X_k$. Since we are now longer in the case of metric spaces, we work with nets instead of sequences. The conditions in the next theorem are a generalization of the sequence converge conditions found in \cite{MR2146225}, \cite{MR2301938} and \cite{MR3679613}.

\begin{theorem}\label{theorem:convergence.nets}
	A net $\{x^\lambda\}_{\lambda\in \Lambda}$ converges to $x$ in $(Y,\tau_{patch})$ if and only if
	\begin{enumerate}[(i)]
		\item\label{item:conv1} For all $1\leq k \leq\ell(x)$ with $k\neq\infty$, there exists $\lambda_0\in \Lambda$ such that for all $\lambda\geq \lambda_0$, $\ell(x^\lambda)\geq k$ and $(x_1^\lambda,\ldots,x_k^\lambda)_{\lambda\geq \lambda_0}$ converges to $(x_1,\ldots,x_{k})$ in $Y_k$.
		\item\label{item:conv2} If $\ell(x)<\infty$, then for any compact $K\scj X_{\ell(x)+1}$, there exists $\lambda_0\in \Lambda$ such that for all $\lambda\geq \lambda_0$, either $\ell(x^\lambda)=\ell(x)$, or $\ell(x^\lambda)>\ell(x)$ and $x_{\ell(x)+1}^\lambda\notin K$.
	\end{enumerate}
\end{theorem}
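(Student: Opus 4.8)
The plan is to decompose convergence in $\tau_{patch}$ as simultaneous convergence in $\tau_{cyl}$ and in the cocompact topology (a standard fact about a supremum of two topologies), and then to match these two halves with conditions (\ref{item:conv1}) and (\ref{item:conv2}) respectively, testing everything against the explicit basis of Proposition~\ref{prop:basis.patch.topology} refined by Remark~\ref{remark:open.basic.set}.

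First I would show that (\ref{item:conv1}) is precisely convergence in $\tau_{cyl}$. A basic $\tau_{cyl}$-neighbourhood of $x$ is a cylinder $Z(V)$ with $V\subseteq Y_m$ open and $(x_1,\dots,x_m)\in V$ (which forces $m\leq\ell(x)$); taking $V=Y_m$ shows $\tau_{cyl}$-convergence yields $\ell(x^\lambda)\geq m$ eventually, and then running over all open $V\ni(x_1,\dots,x_m)$ shows the eventually-defined tail net of $m$-truncations converges to $(x_1,\dots,x_m)$ in $Y_m$, which is exactly (\ref{item:conv1}); the converse is the same computation read backwards. Next, assuming (\ref{item:conv1}), I would match (\ref{item:conv2}) with convergence in the cocompact topology, i.e.\ with: for every compact saturated $\widetilde K\subseteq Y$ with $x\notin\widetilde K$, eventually $x^\lambda\notin\widetilde K$.

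For the direction (\ref{item:conv2})$\Rightarrow$cocompact-convergence, I would use Lemma~\ref{lemma:compact.saturated.as.union} to replace $\widetilde K$ by a finite union $\bigcup_{i=1}^{n}Z(K_i)$ with each $K_i\subseteq Y_{l_i}$ compact and $x\notin Z(K_i)$, and then treat each $i$ by splitting on $\ell(x)$: if $\ell(x)<l_i$ then $\ell(x)<\infty$ and applying (\ref{item:conv2}) to the compact image of $K_i$ under the $(\ell(x)+1)$-st coordinate projection $Y_{l_i}\to X_{\ell(x)+1}$ forces $x^\lambda\notin Z(K_i)$ eventually (either $x^\lambda$ is too short, or its $(\ell(x)+1)$-st coordinate is already outside the projection of $K_i$); if $l_i\leq\ell(x)$ then $(x_1,\dots,x_{l_i})\notin K_i$ and (\ref{item:conv1}) provides a tail net of $l_i$-truncations converging to $(x_1,\dots,x_{l_i})$, which one wants to be eventually outside $K_i$. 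For the converse, given $\ell(x)=n<\infty$ and a compact $K\subseteq X_{n+1}$, I would use local compactness of $Y_n$ to choose a compact neighbourhood $L$ of $(x_1,\dots,x_n)$ in $Y_n$ and set $\widetilde K=\{y\in Y_{n+1}:(y_1,\dots,y_n)\in L,\ y_{n+1}\in K\}$; this is closed in the compact set $L\times K$, hence compact, and saturated since $Y_{n+1}$ is $T_1$, so $Z(\operatorname{int}L)\cap Z(\widetilde K)^c$ is a $\tau_{patch}$-neighbourhood of $x$ (note $x\notin Z(\widetilde K)$ for length reasons), and $x^\lambda$ being eventually inside it delivers exactly the dichotomy of (\ref{item:conv2}).

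The step I expect to be the crux is the one just mentioned in the $l_i\leq\ell(x)$ case: showing that a net converging in $Y_{l_i}$ to a point $p=(x_1,\dots,x_{l_i})$ lying outside the compact set $K_i$ must eventually leave $K_i$. In a Hausdorff space this is immediate because $K_i$ is then closed, but under only the $T_1$ and sober hypotheses one must instead establish $p\notin\overline{K_i}$, and this is where essentially all the work sits. The plan here is to extract $p\notin\overline{K_i}$ from local compactness of $Y_{l_i}$ --- shrinking, via the construction in the proof of Lemma~\ref{lemma:compact.saturated.as.union}, so that $K_i$ sits inside an open subset of $Y_{l_i}$ whose cylinder avoids $x$, and then separating $p$ from $K_i$ using a compact neighbourhood of $p$ --- so I would expect any use of local compactness in its strong (neighbourhood-base) form, and any hypothesis needed beyond $T_1$ and sober, to be concentrated precisely there.
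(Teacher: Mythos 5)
Your overall architecture --- convergence in $\tau_{patch}$ equals simultaneous convergence in $\tau_{cyl}$ and in the cocompact topology, with (\ref{item:conv1}) matched to the former and, given (\ref{item:conv1}), (\ref{item:conv2}) matched to the latter --- is a clean reorganization of what the paper does directly on the basis $Z(U)\cap Z(K)^c$. The forward direction, the identification of (\ref{item:conv1}) with cylinder convergence, your set $\widetilde K\scj Y_{n+1}$ for the converse of (\ref{item:conv2}), and your treatment of the case $\ell(x)<l_i$ (projecting $K_i$ all the way down to a compact subset of $X_{\ell(x)+1}$, which is in fact slightly more careful than the paper) all agree with the paper's proof and are correct.

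The step you single out as the crux is, however, a genuine gap, and your proposed repair does not close it. Knowing $K_i\scj U_i$ with $U_i$ open and $p=(x_1,\ldots,x_{l_i})\notin U_i$ gives $p\notin K_i$ but says nothing about $p\notin\overline{K_i}$, since $\overline{U_i}$ may perfectly well contain $p$; and a compact neighbourhood of $p$ need not be disjoint from $K_i$ when there is no Hausdorff separation to invoke. Worse, the step cannot be completed under the stated hypotheses, because the implication (\ref{item:conv1})$\,\wedge\,$(\ref{item:conv2})$\,\Rightarrow\,$patch convergence is false for general $T_1$ sober locally compact spaces. Take $X_1=\nn\cup\{a,b\}$, where $U$ is declared open iff $\nn\setminus U$ is finite whenever $U\cap\{a,b\}\neq\emptyset$, and $Y_1=X_1$, $Y_k=\emptyset$ for $k\geq2$. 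One checks that $X_1$ is $T_1$, locally compact and sober (its only irreducible closed sets are singletons) but not Hausdorff, and that $K=\{a\}\cup\nn$ is compact and saturated but not closed, with $b\in\overline{K}\setminus K$. Then $\{b\}=Z(K)^c$ is patch-open, while the sequence $x^n=n$ converges to $b$ in $\tau_{cyl}$ and satisfies (\ref{item:conv1}) and (\ref{item:conv2}) trivially, yet is never in $\{b\}$. So no amount of cleverness with local compactness can supply the missing separation.

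What you have actually located is a defect shared by the paper's own proof, which passes over the same point silently: both in Remark \ref{remark:open.basic.set} and in the converse direction it treats $\pi_{k_i,l}^{-1}(K_i^c)$ as open, i.e.\ tacitly assumes that compact subsets of $Y_{l_i}$ are closed. Under the additional hypothesis that the $X_n$ are Hausdorff --- which holds in every application in Sections \ref{section:graph} and \ref{section:topolgical.graph}, where the spaces are discrete or locally compact Hausdorff --- compact sets are closed, $p\notin K_i$ does force the truncated net eventually out of $K_i$, and both your argument and the paper's go through verbatim. The correct fix is therefore an added separation hypothesis (or a restriction to complements of closed compact sets), not a sharper use of local compactness.
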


\begin{proof}
	First suppose that $\{x^\lambda\}_{\lambda\in \Lambda}$ converges to $x$. We first prove (\ref{item:conv1}). Let $1\leq k \leq \ell(x)$ with $k\neq\infty$ be given and consider an open set $U\scj Y_k$ containing $(x_1,\ldots,x_k)$. In this case $x\in Z(U)$ and hence there exists $\lambda_0\in \Lambda$ such that for all $\lambda\geq \lambda_0$, $x^\lambda \in Z(U)$ and therefore $\ell(x^\lambda)\geq k$. By letting $U$ vary, we see that $(x_1^\lambda,\ldots,x_k^\lambda)_{\lambda\geq \lambda_0}$ converges to $(x_1,\ldots,x_{k})$ in $Y_k$.
	
	For (\ref{item:conv2}), let $K\scj X_{\ell(x)+1}$ be compact and $K'$ be a compact neighbourhood of $x$ in $Y_{\ell(x)}$.  Notice that $L=(K'\times K)\cap Y_{\ell(x)+1}$ is compact. By convergence, there exists $\lambda_0\in \Lambda$ such that $x^\lambda\in Z(\mathrm{int}(K'))\cap Z(L)^c$ for all $\lambda\geq \lambda_0$, but this implies that either $\ell(x^\lambda)=\ell(x)$, or $\ell(x^\lambda)>\ell(x)$ and $x_{\ell(x)+1}^\lambda\notin K$.
	
	Now, for the converse suppose that a net $\{x^\lambda\}_{\lambda\in \Lambda}$ and a point $x$ satisfies (\ref{item:conv1}) and (\ref{item:conv2}). Also, let $Z(U)\cap Z(K)^c$ be an open basic set containing $x$. By Remark \ref{remark:open.basic.set}, we can assume that $U\scj Y_k$ for some $k\in\nn^*$ and  $K=\bigcup_{i=1}^n K_i$, where $K_i\scj Y_{k_i}$ with $k_i>k$ for all $i=1,\ldots,n$.
	
	If $\ell(x)=\infty$, by taking $l \geq \max\{k_1,\ldots,k_n\}$ and considering the open set $\pi_{k,l}^{-1}(U)\cup\bigcup_{i=1}^n \pi_{k_i,l}^{-1}(K_i^c)$ in $Y_k$ and using (\ref{item:conv1}), we see that there exists $\lambda_0$ such that $x^\lambda\in Z(U)\cap Z(K)^c$ for all $\lambda\geq \lambda_0$.
	
	In the case that $\ell(x)<\infty$, since $x\in Z(U)\cap Z(K)^c$, we have that $\ell(U)\leq \ell(x)$. We write $K=K'\cup K''$ where $K'=\cup_{i=1}^{n'} K'_i$, with $K'_i\scj Y_{k'_i}$ and $k'_i\leq\ell(x)$, and $K''=\cup_{i=1}^{n''} K''_i$, with $K''_i\scj Y_{k''_i}$ and $k''_i>\ell(x)$. As in the previous case, we find $\lambda_1$ such that for all $\lambda\geq\lambda_1$, $x^{\lambda}\in Z(U)\cap Z(K')^c$. Now, using (\ref{item:conv2}) for each $\pi_{\ell(x)+1,k''_i}(K''_i)$, we find $\lambda_2$ such that $x^{\lambda}\notin Z(K''_i)$ for all $\lambda\geq\lambda_2$ and all $i=1,\ldots,n''$. By taking $\lambda_0$ greater than $\lambda_1$ and $\lambda_2$, we have that for all $\lambda\geq\lambda_0$, $x^{\lambda}\in Z(U)\cap Z(K)^c$.

\end{proof}

We now see that even if we didn't start with Hausdorff spaces, the patch topology is Hausdorff. Also, we consider the property of being locally compact.

\begin{proposition}
	The space $(Y,\tau_{patch})$ is locally compact Hausdorff.
\end{proposition}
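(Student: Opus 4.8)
The plan is to handle the Hausdorff property and local compactness separately, in both cases reducing to local compactness of $(Y,\tau_{cyl})$ together with the convergence criterion of Theorem~\ref{theorem:convergence.nets}. The first thing I would record is that $(Y,\tau_{cyl})$ is locally compact in the strong form needed below: if $x$ belongs to a $\tau_{cyl}$-open set, then, the open cylinder sets being a basis, $x\in Z(V)$ for some open $V\scj Y_l$, so $\ell(x)\geq l$ and $(x_1,\dots,x_l)\in V$; by local compactness of $Y_l$ there is a compact neighbourhood $K'$ of $(x_1,\dots,x_l)$ in $Y_l$ with $K'\scj V$, and then, by Lemma~\ref{lemma:compact.saturated.cylinder}, $Z(K')$ is compact and saturated, $Z(\text{int}(K'))$ is an open cylinder set containing $x$, and $Z(\text{int}(K'))\scj Z(K')\scj Z(V)$.

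For the Hausdorff property I would use that $(Y,\tau_{cyl})$ is sober, hence $T_0$: given $x\neq y$, some $\tau_{cyl}$-open set contains exactly one of them, and, being a union of open cylinder sets, it contains a basic one $Z(V)$ with, say, $x\in Z(V)$ and $y\notin Z(V)$. Choosing $K'$ as in the previous paragraph with $K'\scj V$, the set $Z(\text{int}(K'))$ is a $\tau_{patch}$-open neighbourhood of $x$, while $Z(K')^{c}$ is cocompact, hence $\tau_{patch}$-open, and contains $y$ since $y\notin Z(V)\supseteq Z(K')$; as these two sets are disjoint, $(Y,\tau_{patch})$ is Hausdorff.

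Since a Hausdorff space is locally compact as soon as every point has a single compact neighbourhood, it remains to show that each $x\in Y$ has a $\tau_{patch}$-compact neighbourhood; I would take $Z(K')$, where $K'$ is a compact neighbourhood of $(x_1,\dots,x_k)$ in $Y_k$ for some finite $k\leq\ell(x)$, which contains the $\tau_{patch}$-open set $Z(\text{int}(K'))\ni x$. To see $Z(K')$ is $\tau_{patch}$-compact I would pass, from an arbitrary net in $Z(K')$, to a universal subnet $\{w^{\lambda}\}$. Universality makes the terms transparent: the lengths $\ell(w^{\lambda})$ are eventually equal to a fixed value or eventually tend to $\infty$; the first $k$ coordinates form a universal net in the compact space $K'$, hence converge to some $z^{*}\in K'$; and for each coordinate $j$, either the $j$-th coordinates eventually escape every compact subset of $X_j$, or they are eventually confined to a compact subset of $X_j$ and therefore converge in $X_j$. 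Running this dichotomy level by level, and using that each $Y_m$ is closed in $X_1\times\cdots\times X_m$ so that coordinatewise limits of segments lying in $Y_m$ lie in $Y_m$, produces a point $w$: either there is a least coordinate $j>k$ whose entries escape, and then the segments of length $j-1$ converge to a point $w\in Y_{j-1}$; or no coordinate escapes and the lengths stabilise at some finite $m$, giving $w\in Y_m$; or no coordinate escapes and the lengths go to infinity, giving $w\in Y_{\infty}$. In every case $(w_1,\dots,w_k)=z^{*}\in K'$, so $w\in Z(K')$, and conditions (\ref{item:conv1}) and (\ref{item:conv2}) of Theorem~\ref{theorem:convergence.nets} hold, so $w^{\lambda}\to w$; hence $Z(K')$ is $\tau_{patch}$-compact.

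The main obstacle is exactly this last compactness argument: one must organise the level-by-level extraction so that the point $w$ it produces both satisfies the convergence conditions of Theorem~\ref{theorem:convergence.nets} and belongs to $Z(K')$. Passing to a universal subnet is the device that makes the ``escape or converge'' alternative at each coordinate clean and that simultaneously controls the asymptotics of the lengths of the terms; and it is the fact that the first $k$ coordinates remain inside the compact set $K'$ that keeps $w$ inside $Z(K')$, so that no closedness or coherence hypothesis on the spaces $X_j$ is required.
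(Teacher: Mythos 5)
Your argument is correct, but it takes a genuinely different route from the paper's. The paper disposes of the proposition in two lines: having already shown that $(Y,\tau_{cyl})$ is sober, it invokes the general theory of locally compact sober spaces (Proposition~V.5.12 and Corollary~V.5.13 of the cited reference) to conclude that local compactness of $\tau_{cyl}$ --- which is immediate from Lemma~\ref{lemma:compact.saturated.cylinder} --- already forces the patch topology to be locally compact Hausdorff. You instead argue by hand: sobriety enters only to give $T_0$, the separation of two points by $Z(\mathrm{int}(K'))$ and the cocompact set $Z(K')^c$ is explicit, and the $\tau_{patch}$-compactness of the neighbourhoods $Z(K')$ is established via universal nets together with the convergence criterion of Theorem~\ref{theorem:convergence.nets}, which the paper proves but does not actually use for this proposition. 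What you gain is a self-contained proof with concrete compact neighbourhoods and no appeal to external patch-topology machinery; what the paper gains is brevity. The one place your sketch needs care in a full write-up is the level-by-level extraction: the phrase ``the $j$-th coordinates escape every compact set'' only makes sense once the lengths $\ell(w^{\lambda})$ are eventually $\geq j$, so the trichotomy should be organised by first fixing the asymptotic behaviour of the lengths (eventually constant equal to some finite $N$, versus eventually $\geq m$ for every $m$) and only then running the escape-or-converge dichotomy over those coordinates that are eventually defined; otherwise the case ``lengths stabilise at $N$ but some coordinate $j\leq N$ escapes'' and the vacuous case $j>N$ get conflated. With that ordering each of your three cases does produce, exactly as you say, a limit $w\in Z(K')$ satisfying (\ref{item:conv1}) and (\ref{item:conv2}), and the compactness follows.
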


\begin{proof}
	Since $\tau_{cyl}$ is a sober topology, $(Y,\tau_{cyl})$ is homeomorphic to $\mathrm{pt}(\tau_{cyl})$. By \cite[Proposition~V.5.12 and Corollary~V.5.13]{MR1975381}, to prove that $(Y,\tau_{patch})$ is locally compact Hausdorff, it is sufficient to show that $(Y,\tau_{cyl})$ is locally compact, which is immediate from Lemma \ref{lemma:compact.saturated.cylinder}.
\end{proof}

%
%

\section{The boundary path space of a graph}\label{section:graph}

In this section we consider the case of directed (discrete) graphs. The idea is to use the definition of the C*-algebra of a graph to impose a relation in the space of all paths, and using the discussion in Section \ref{section:frame}, find a subspace that is exactly the boundary path space. First we recall some of the necessary terminology.

A directed graph is a quadruple $\dgraphuple$, where $\dgraph^0$ and $\dgraph^1$ are sets, whose elements are called vertices and edges respectively, and $d,r:\dgraph^1\to\dgraph^0$ are maps called the domain and range maps\footnote{It is usual to use $s$ instead of $d$ and call it the source map, however we use Katsura's convention \cite{MR2067120}.}. A path is a finite of infinite sequence of edges $\mu=\mu_1\ldots\mu_n(\ldots)$ such that $d(\mu_i)=r(\mu_{i+1})$ for all $i$. The number of edges in path $\mu$ is called its length and denoted by $|\mu|$. Vertices are considered paths of length $0$. The set of all paths of length $n\in\nn\cup\{\infty\}$ is denoted by $\dgraph^n$ and the set of all finite paths (including the vertices) is denoted by $\dgraph^*$. We extend $r$ and $d$ for paths: if $\mu$ is a path of positive length, we define $r(\mu)=r(\mu_1)$ and $d(\mu)=d(\mu_{|\mu|})$; if $\mu$ is a vertex we define $d(\mu)=r(\mu)=\mu$; and if $|\mu|=\infty$ we only define $r(\mu)=r(\mu_1).$

Given $\mu\in\dgraph^*$ and $\nu\in\dgraph^*\cup\dgraph^{\infty}$ such that $r(\nu)=d(\mu)$, then $\mu\nu$ is also a path. We say that a path $\mu$ is a prefix of a path $\xi$ if $\xi=\mu\nu$ for some path $\nu$.

A vertex $v\in\dgraph^0$ is a source if $r^{-1}(v)=\emptyset$, it is an infinite receiver if $r^{-1}(v)$ is infinite, it is a regular vertex if it is neither a source nor an infinite receiver, and it is singular if it is not regular. We denote by $\dgraph_{rg}^0$ and $\dgraph_{sg}^0$ the sets of all regular and singular vertices respectively.

We want to give a topology on the space of all paths $\dgraph^*\cup\dgraph^{\infty}$ using Section \ref{section:topology} results. For that, we consider $X_1=\dgraph^0$ and $X_n=\dgraph^1$ for all $n\geq 2$ as discrete topological spaces. Observe that for each $n\in\nn^*$ there is a bijection between $\dgraph^n$ and the set $Y_{n+1}=\{(x_1,\ldots,x_{n+1})\in X_1\times\cdots\times X_{n+1}\,|\,x_2\ldots x_{n+1}\in\dgraph^n\text{ and }r(x_2)=x_1\}$ (it is basically the graph of the map $r:\dgraph^n\to\dgraph^0$). We also define $Y_1=X_1=\dgraph^0$ and observe that the family $\{Y_n\}_{n\in\nn^*}$ satisfies the conditions in Section \ref{section:topology} and that $Y_{\infty}=\dgraph^{\infty}$. We induce the topology on $\dgraph^*\cup\dgraph^{\infty}$ using the above bijections between $\dgraph^n$ and $Y_{n+1}$.

Since in the discrete topology, compactness is equivalent to finiteness, the patch topology on $\dgraph^*\cup\dgraph^{\infty}$ is the same one described by Webster \cite{MR3119197}. More specifically, from Theorem \ref{theorem:convergence.nets}, we need only to consider the basis $\mathcal{B}$ consisting of sets of the form $Z(\{\mu\})\cap(\bigcup_{e\in F}Z(\{\mu e\}))^c$ for some finite set $F\scj r^{-1}(d(\mu))$. As we will see in Lemma \ref{lemma:cylinder.discrete.case}, in order for $\mathcal{B}$ to be closed under intersections, we also assume that $\emptyset\in\mathcal{B}$.

We now focus our attention to the boundary path space $\partial\dgraph=\{\mu\in\dgraph^*\,|\,d(\mu)\in\dgraph_{sg}^0\}\cup\dgraph^{\infty}$ with the subspace topology. The idea is to impose new relations on a basis for $\dgraph^*\cup\dgraph^{\infty}$ and use Section 2 to find $\partial\dgraph$. Our motivation is the definition of the graph C*-algebra \cite{MR1670363}, which is the universal C*-algebra generated by mutually orthogonal projections $\{p_v\}_{v\in\dgraph^0}$ and partial isometries $\{s_e\}_{e\in\dgraph^1}$ satisfying the relations:
\begin{itemize}
	\item[CK1] $s_e^*s_e=p_{d(e)}$,
	\item[CK2] $s_es_e^*\leq p_{r(e)}$,
	\item[CK3] $p_v=\sum_{e\in r^{-1}(v)}s_es_e^*$, for all $v\in\dgraph_{rg}^0$.
\end{itemize}

Let us interpret the above relations working with homeomorphisms between open subsets of $\dgraph^*\cup\dgraph^{\infty}$. Using the notation of Section \ref{section:topology}, for $v\in\dgraph^0$, define $Z_{v}=Z(\{v\})$ and for $e\in\dgraph^1$, define $Z_{e}=Z(\{e\})$, where we used the identification of $\dgraph^1$ and $Y_2$. For $e\in\dgraph^1$, we also define the map $\sigma_e:Z_{d(e)}\to Z_e$ by $\sigma_e(\mu)=e\mu$. These maps are homeomorphisms with inverse $\sigma_e^{-1}:Z_e\to Z_{d(e)}$ given by $\sigma_e^{-1}(e\mu)=\mu$.

Relation CK1 can be seen as $\sigma_e^{-1}\sigma_e=Id_{Z_{d(e)}}$, and relation CK2 can be seen as $\sigma_e\sigma_e^{-1}\leq Id_{Z_{r(e)}}$, in the sense that $Z_e\scj Z_{r(e)}$. We would like to see CK3 as the equality $Z_v=\cup_{e\in r^{-1}(v)} Z_e$, however this is not true for $v\in Z_v$, but $v\notin Z_e$ for all $e\in r^{-1}(e)$.

We want to use relation CK3 as a motivation to define a coverage on the basis for the patch topology on $\dgraph^*\cup\dgraph^{\infty}$ consisting of sets of the form $Z(\{\mu\})\cap(\bigcup_{e\in F}Z(\{\mu e\}))^c$ for some finite set $F\scj r^{-1}(d(\mu))$.

\begin{lemma}\label{lemma:cylinder.discrete.case}
	Let $\mu,\nu\in\dgraph^*$ be paths, and $F\scj r^{-1}(d(\mu))$, $G\scj r^{-1}(d(\nu))$ be finite sets. Then $Z(\{\nu\})\cap(\bigcup_{f\in G}Z(\{\nu f\}))^c\scj Z(\{\mu\})\cap(\bigcup_{e\in F}Z(\{\mu e\}))^c$ if, and only if, either $\nu=\mu$ and $F\scj G$, or $|\nu|>|\mu|$, $\mu$ is a prefix of $\nu$ and $\nu_{|\mu|+1}\notin F$. Also
	
	\[[Z(\{\mu\})\cap(\bigcup_{e\in F}Z(\{\mu e\}))^c]\cap[Z(\{\nu\})\cap(\bigcup_{f\in G}Z(\{\nu f\}))^c]=\]

	\[=\begin{cases}
	Z(\{\mu\})\cap(\bigcup_{e\in F}Z(\{\mu e\}))^c,\text{ if }|\mu|>|\nu|\text{, }\nu\text{ is a prefix of }\mu\text{ and }\mu_{|\nu|+1}\notin G, \\
	Z(\{\nu\})\cap(\bigcup_{f\in G}Z(\{\mu f\}))^c,\text{ if }|\mu|<|\nu|\text{, }\mu\text{ is a prefix of }\nu\text{ and }\nu_{|\mu|+1}\notin F, \\
	Z(\{\mu\})\cap(\bigcup_{e\in F\cup G}Z(\{\mu e\}))^c,\text{ if }\mu=\nu, \\
	\emptyset,\text{ otherwise.}
	\end{cases}\]
	
\end{lemma}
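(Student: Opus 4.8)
The plan is to reduce both assertions to a single pointwise description of the basic sets. Directly from the definition of the cylinder sets, a path $\xi\in\dgraph^*\cup\dgraph^\infty$ belongs to $Z(\{\mu\})\cap(\bigcup_{e\in F}Z(\{\mu e\}))^c$ precisely when $\mu$ is a prefix of $\xi$ and, moreover, either $\xi=\mu$ or $|\xi|>|\mu|$ with $\xi_{|\mu|+1}\notin F$. With this in hand everything becomes combinatorics of the prefix order on paths, and I would also record the elementary observation that any two paths that are simultaneously prefixes of a common path are prefix-comparable (the prefixes of a fixed path being linearly ordered by the prefix relation).

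For the inclusion statement I argue both directions. Necessity: the path $\nu$ itself lies in $Z(\{\nu\})\cap(\bigcup_{f\in G}Z(\{\nu f\}))^c$, the exclusion condition being vacuous at length $|\nu|$, so the assumed inclusion forces $\nu\in Z(\{\mu\})\cap(\bigcup_{e\in F}Z(\{\mu e\}))^c$; the pointwise description then already yields that $\mu$ is a prefix of $\nu$ and, when $|\nu|>|\mu|$, that $\nu_{|\mu|+1}\notin F$. In the remaining case $\nu=\mu$ I show $F\scj G$ by contraposition: if $e\in F\setminus G$, then $\nu e=\mu e$ --- a genuine path since $e\in r^{-1}(d(\mu))=r^{-1}(d(\nu))$ --- lies in the left-hand set but not in the right-hand one. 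Sufficiency: in each of the two listed cases I take $\xi$ in the left-hand set and verify the defining conditions of the right-hand set, using transitivity of the prefix relation and the fact that $\xi_{|\mu|+1}=\nu_{|\mu|+1}$ whenever $\mu$ is a proper prefix of $\nu$ and $\nu$ is a prefix of $\xi$; the hypothesis $F\scj G$ is precisely what turns ``$\xi_{|\nu|+1}\notin G$'' into ``$\xi_{|\mu|+1}\notin F$'' when $\mu=\nu$.

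For the intersection formula I again argue pointwise. If neither of $\mu,\nu$ is a prefix of the other, then by the comparability observation no path can lie in both basic sets, so the intersection is empty and we land in the ``otherwise'' clause. If $\mu=\nu$, the two exclusion conditions on $\xi_{|\mu|+1}$ amalgamate into a single one with forbidden set $F\cup G$, giving the third displayed case. If $|\mu|<|\nu|$ and $\mu$ is a prefix of $\nu$, then every $\xi$ in $Z(\{\nu\})\cap(\bigcup_{f\in G}Z(\{\nu f\}))^c$ has $\xi_{|\mu|+1}=\nu_{|\mu|+1}$, a fixed edge; hence the $\mu$-exclusion condition is either automatically satisfied --- when $\nu_{|\mu|+1}\notin F$, so the intersection is all of $Z(\{\nu\})\cap(\bigcup_{f\in G}Z(\{\nu f\}))^c$, the second displayed case --- or never satisfied, so the intersection is empty. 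The situation $|\mu|>|\nu|$ with $\nu$ a prefix of $\mu$ is symmetric and yields the first displayed case.

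I do not expect a real obstacle: once the pointwise description is isolated the content is elementary, and the only thing needing care is the bookkeeping of the sub-cases, in particular tracking which ones collapse to $\emptyset$ and are absorbed into ``otherwise'', and checking that vertices (length-$0$ paths) and infinite paths are handled uniformly by the pointwise description. I would also point out that the $Z(\{\mu f\})$ appearing in the second line of the displayed formula should read $Z(\{\nu f\})$, since for $f\in G$ the concatenation $\mu f$ need not even be a path.
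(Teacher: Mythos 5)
Your proposal is correct and follows essentially the same route as the paper, which simply observes that $Z(\{\mu\})\cap(\bigcup_{e\in F}Z(\{\mu e\}))^c$ consists of the paths having $\mu$ but no $\mu e$, $e\in F$, as a prefix and declares the rest straightforward; you merely fill in the case bookkeeping explicitly. Your remark that $Z(\{\mu f\})$ in the second line of the displayed formula should read $Z(\{\nu f\})$ is also correct --- that is a typo in the statement, since for $f\in G\scj r^{-1}(d(\nu))$ the concatenation $\mu f$ need not be a path.
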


\begin{proof}
	 Noticing that the elements of the set $Z(\{\mu\})\cap(\bigcup_{e\in F}Z(\{\mu e\}))^c$ are paths that have $\mu$ as a prefix, but not $\mu e$ for all $e\in F$, the proof is straightforward.
\end{proof}

To define a coverage of $\mathcal{B}$, let $a=Z(\{\mu\})\cap(\bigcup_{e\in F}Z(\{\mu e\}))^c\in\mathcal{B}$ be given. If $d(\mu)\in\dgraph^0_{sg}$ define $C'(a)=\{\{\emptyset, a\}\}$. If $d(\mu)\in\dgraph^0_{rg}$ and $F\subseteq r^{-1}(d(\mu))$, define $C'(a)=\{\{\emptyset, a\},\{\emptyset\}\cup\{Z(\{\mu e\})\}_{e\in r^{-1}(d(\mu))\cap F^c}\}$. Also, $C'(\emptyset)=\{\{\emptyset\}\}$.

\begin{proposition}
	The above families give a coverage $C'$ of $\mathcal{B}$.
\end{proposition}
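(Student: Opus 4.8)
The plan is to verify the meet-stability condition of a coverage directly, going through the cases that arise from the definition of $C'$. Recall that we must show: if $S \in C'(a)$ and $b \leq a$ in $\mathcal{B}$, then $\{s \wedge b : s \in S\} \in C'(b)$. Since $b \leq a$, Lemma~\ref{lemma:cylinder.discrete.case} tells us exactly what $b$ looks like relative to $a = Z(\{\mu\})\cap(\bigcup_{e\in F}Z(\{\mu e\}))^c$: either $b = \emptyset$, or $b = Z(\{\nu\})\cap(\bigcup_{f\in G}Z(\{\nu f\}))^c$ with either $\nu = \mu$ and $F \subseteq G$, or $|\nu| > |\mu|$, $\mu$ a prefix of $\nu$, and $\nu_{|\mu|+1} \notin F$.

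First I would dispose of the trivial cases. If $a = \emptyset$, then $b = \emptyset$ and $S = \{\emptyset\}$, and $\{s \wedge \emptyset : s \in S\} = \{\emptyset\} = C'(\emptyset)$'s single element. If $b = \emptyset$ (for any $a$), then every $s \wedge b = \emptyset$, so $\{s \wedge b : s \in S\} = \{\emptyset\} \in C'(\emptyset)$. The set $\{\emptyset, a\} \in C'(a)$ is likewise easy: $\{\emptyset \wedge b, a \wedge b\} = \{\emptyset, b\}$ since $b \leq a$, and $\{\emptyset, b\} \in C'(b)$ in all cases by inspection of the definition of $C'$.

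The substantive case is $a$ with $d(\mu) \in \dgraph^0_{rg}$ and $S = \{\emptyset\}\cup\{Z(\{\mu e\})\}_{e\in r^{-1}(d(\mu))\cap F^c}$, together with a nonempty $b = Z(\{\nu\})\cap(\bigcup_{f\in G}Z(\{\nu f\}))^c \leq a$. Here I split on the two subcases from Lemma~\ref{lemma:cylinder.discrete.case}. If $\nu = \mu$ and $F \subseteq G$: then for $e \in r^{-1}(d(\mu)) \cap F^c$, using Lemma~\ref{lemma:cylinder.discrete.case} again to compute the meet $Z(\{\mu e\}) \wedge b$, one gets $Z(\{\mu e\})$ when $e \notin G$ and $\emptyset$ when $e \in G$; since $d(\nu) = d(\mu) \in \dgraph^0_{rg}$, the resulting set $\{\emptyset\} \cup \{Z(\{\mu e\})\}_{e \in r^{-1}(d(\mu)) \cap G^c}$ is precisely the second member of $C'(b)$. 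If instead $|\nu| > |\mu|$ with $\mu$ a prefix of $\nu$ and $\nu_{|\mu|+1} \notin F$: then $\nu_{|\mu|+1} \in r^{-1}(d(\mu)) \cap F^c$, and the only element $Z(\{\mu e\})$ of $S$ that meets $b$ nontrivially is the one with $e = \nu_{|\mu|+1}$ (for other $e$ the first coordinate where they differ forces the meet to be $\emptyset$), and that meet equals $b$ itself. Hence $\{s \wedge b : s \in S\} = \{\emptyset, b\} \in C'(b)$.

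The main obstacle, such as it is, is purely bookkeeping: one must be careful that when $b$ strictly extends $a$ the regularity hypothesis need not transfer to $d(\nu)$, so the output family $\{\emptyset, b\}$ must land in $C'(b)$ regardless of whether $d(\nu)$ is regular or singular — which it does, since $\{\emptyset, b\}$ is always the first listed member of $C'(b)$. I would also remark that $C'(a)$ as defined does consist of subsets of $\downarrow a \cap \mathcal{B}$: each $Z(\{\mu e\})$ with $e \in r^{-1}(d(\mu))$ satisfies $Z(\{\mu e\}) \leq a$ because $\mu$ is a prefix of $\mu e$ and $(\mu e)_{|\mu|+1} = e \notin F$, again by Lemma~\ref{lemma:cylinder.discrete.case}, and $\emptyset \leq a$ trivially. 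With all cases checked, $C'$ is a coverage.
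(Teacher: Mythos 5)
Your proof is correct and follows essentially the same route as the paper's: reduce to the nontrivial covering $\{\emptyset\}\cup\{Z(\{\mu e\})\}_{e\in r^{-1}(d(\mu))\cap F^c}$, split on the two shapes of $b\leq a$ given by Lemma~\ref{lemma:cylinder.discrete.case}, and compute the meets to land in $C'(b)$ in each case. Your extra remarks (that the coverings lie in $\downarrow a\cap\mathcal{B}$, and that $\{\emptyset,b\}\in C'(b)$ regardless of whether $d(\nu)$ is regular) are correct and slightly more careful than the paper's treatment.
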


\begin{proof}
	Let $a,b\in\mathcal{B}$ be such that $b\scj a$. We need to check the meet stability property. If either $a$ or $b$ is the empty set, the result is trivial. Suppose then
	that $a=Z(\{\mu\})\cap(\bigcup_{e\in F}Z(\{\mu e\}))^c$ and $b=Z(\{\nu\})\cap(\bigcup_{f\in G}Z(\{\nu f\}))^c$.
	
	If $d(\mu)\in\dgraph^0_{sg}$, we only need to check that $\{\emptyset\cap b,a\cap b\}\in C'(b)$, which is true since $a\cap b=b$. If $d(\mu)\in\dgraph^0_{rg}$, it is still the case that $\{\emptyset\cap b,a\cap b\}\in C'(b)$.
	
	Finally, suppose that $F\subseteq r^{-1}(d(\mu))$ and consider the following covering $\{\emptyset\}\cup\{Z(\{\mu e\})\}_{e\in r^{-1}(d(\mu))\cap F^c}$. Using Lemma \ref{lemma:cylinder.discrete.case}, we have two cases for $b$. The first case is when $\mu=\nu$ and $F\scj G$. In this case, by the same lemma, $b\cap Z(\{\mu e\})=\emptyset$ if $e\in G$ and $b\cap Z(\{\mu e\})=Z(\{\mu e\})$ if $e\in r^{-1}(d(\mu))\cap G^c$. Thus $\{b\cap \emptyset\}\cup\{b\cap Z(\{\mu e\})\}_{e\in r^{-1}(d(\mu))\cap F^c}=\{\emptyset\}\cup\{Z(\{\mu e\})\}_{e\in r^{-1}(d(\mu))\cap G^c}\in C'(b)$. The second case is when $|\nu|>|\mu|$, $\mu$ is a prefix of $\nu$ and $\nu_{|\mu|+1}\notin F$. In this case $b\cap Z(\{\mu e\})=\emptyset$ if $e\neq \nu_{|\mu|+1}$ and $b\cap Z(\{\mu e\})=b$ if $e=\nu_{|\mu|+1}$. Hence $\{b\cap \emptyset\}\cup\{b\cap Z(\{\mu e\})\}_{e\in r^{-1}(d(\mu))\cap F^c}=\{\emptyset,b\}\in C'(b)$.
\end{proof}

\begin{theorem}\label{theorem:bondaury.discrete.graph}
	Let $\overline{C}$ and $Y$ be as in Lemma \ref{lemma:subspace.generated.by.relations} with $C'$ as above. Then $Y=\partial\dgraph$.
\end{theorem}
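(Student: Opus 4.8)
The strategy is to apply Remark \ref{remark:subspace} and simply unwind what the condition ``$x \in U$ implies $x \in V$ for some $V \in S$'' says for each basic open set $U = Z(\{\mu\}) \cap (\bigcup_{e\in F} Z(\{\mu e\}))^c \in \mathcal{B}$ and each cover $S \in C'(U)$, and check that the resulting set of points is exactly $\partial\dgraph$. Since $Y$ (as given in Lemma \ref{lemma:subspace.generated.by.relations}) consists of those $x$ in the full path space $\dgraph^*\cup\dgraph^\infty$ satisfying this implication for \emph{every} $U\in\mathcal{B}$ and \emph{every} $S\in C'(U)$, and since most of the covers we imposed are of the trivial form $\{\emptyset, U\}$ (which impose no condition, as $x\in U$ trivially gives $x \in U \in S$) or $\{\{\emptyset\}\}$ for $U = \emptyset$ (also vacuous since $x\notin\emptyset$), the only covers that carry content are the nontrivial ones $S = \{\emptyset\}\cup\{Z(\{\mu e\})\}_{e\in r^{-1}(d(\mu))\cap F^c}$ available when $F \subseteq r^{-1}(d(\mu))$ and $d(\mu)\in\dgraph^0_{rg}$.

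First I would show $\partial\dgraph \subseteq Y$. Take $x \in \partial\dgraph$, so either $\ell(x) = \infty$, or $x = \mu'$ is a finite path with $d(\mu')$ singular. Fix a basic $U = Z(\{\mu\})\cap(\bigcup_{e\in F}Z(\{\mu e\}))^c$ and a nontrivial cover $S = \{\emptyset\}\cup\{Z(\{\mu e\})\}_{e\in r^{-1}(d(\mu))\cap F^c}$ with $F\subseteq r^{-1}(d(\mu))$, $d(\mu)\in\dgraph^0_{rg}$, and suppose $x \in U$, i.e. $\mu$ is a prefix of $x$ but no $\mu e$ ($e\in F$) is. Since $d(\mu)$ is regular it is not singular, so $x \neq \mu$; hence $x$ strictly extends $\mu$, and its $(|\mu|+1)$-st edge $e_0 := x_{|\mu|+1}$ satisfies $r(e_0) = d(\mu)$, i.e. $e_0 \in r^{-1}(d(\mu))$. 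Since $\mu e_0$ is a prefix of $x$ and no $\mu e$ with $e\in F$ is, we get $e_0 \notin F$, so $e_0 \in r^{-1}(d(\mu))\cap F^c$ and $x \in Z(\{\mu e_0\}) \in S$. So the implication holds; by Remark \ref{remark:subspace}, $x \in Y$.

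For the reverse inclusion $Y \subseteq \partial\dgraph$, I argue by contrapositive: suppose $x \in \dgraph^*\cup\dgraph^\infty$ with $x \notin \partial\dgraph$. Then $x = \mu$ is a finite path with $d(\mu)$ regular. Apply the condition with $U = Z(\{\mu\})$ (i.e. $F = \emptyset$, which is allowed since $\emptyset \subseteq r^{-1}(d(\mu))$) and the nontrivial cover $S = \{\emptyset\}\cup\{Z(\{\mu e\})\}_{e\in r^{-1}(d(\mu))}$. Clearly $x = \mu \in Z(\{\mu\}) = U$. But $x$ has length $|\mu|$, whereas every element of $Z(\{\mu e\})$ has length at least $|\mu|+1$, and $x \notin \emptyset$; so $x \notin V$ for every $V \in S$. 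Thus the implication of Remark \ref{remark:subspace} fails for this $U$ and $S$, so $x \notin Y$. Combining the two inclusions gives $Y = \partial\dgraph$.

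The only genuinely delicate point is bookkeeping: being careful that \emph{every} pair $(U,S)$ with $S\in C'(U)$ has been accounted for, so that the one nontrivial family of covers is really the only source of constraints — in particular checking that the covers of the form $\{\emptyset, a\}$ and $\{\{\emptyset\}\}$ impose nothing, and that when $d(\mu)$ is singular the only cover $\{\{\emptyset,a\}\}$ is harmless. None of this requires computation; it is just a matter of reading off the definition of $C'$. So I expect the main write-up obstacle to be notational thoroughness rather than any real difficulty.
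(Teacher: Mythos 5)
Your proof is correct and follows essentially the same route as the paper's: both apply Remark \ref{remark:subspace}, use the covering $\{\emptyset\}\cup\{Z(\{\mu e\})\}_{e\in r^{-1}(d(\mu))}$ of $Z(\{\mu\})$ to exclude finite paths ending at a regular vertex, and verify that boundary paths meet some member of every nontrivial covering via the edge $x_{|\mu|+1}$. The only cosmetic difference is that you make explicit the (correct) observation that the covers $\{\emptyset,a\}$ and $\{\{\emptyset\}\}$ are vacuous, which the paper leaves implicit.
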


\begin{proof}
	We use Remark \ref{remark:subspace}.
	
	We begin showing that $Y\scj \partial\dgraph$. For that, we show that if $\xi\in\dgraph^*\cup\dgraph^{\infty}$ is such that $\xi\in\dgraph^*$ and $d(\xi)\in\dgraph^0_{rg}$, then $\xi\notin Y$. Consider $a=Z(\{\xi\})$ and the covering $S=\{\emptyset\}\cup\{Z(\{\xi e\})\}_{e\in r^{-1}(d(\xi))}$, then $\xi\notin b$ for all $b\in S$, however $\xi\in a$.
	
	Now we prove that $\partial\dgraph\scj Y$. Consider $\xi\in\dgraph^*$ with $d(\xi)\in\dgraph^0_{sg}$, and $a=Z(\{\mu\})\cap(\bigcup_{e\in F}Z(\{\mu e\}))^c\in\mathcal{B}$. If $\xi\in a$, then $\mu$ is a prefix of $\xi$. Supposing that $d(\mu)\in\dgraph^0_{sg}$, which is the case if $\mu=\xi$, then the only covering $S$ of $a$ is $S=\{\emptyset,a\}$ and clearly $\xi\in b$ for some $b\in S$. Supposing that $d(\mu)\in\dgraph^0_{rg}$, then $|\mu|<|\xi|$ and $\xi_{|\mu|+1}\neq e$ for all $e\in F$. For the covering $S=\{\emptyset\}\cup\{Z(\{\mu e\})\}_{e\in r^{-1}(d(\mu))\cap F^c}$, $\xi_{|\mu|+1}\in r^{-1}(d(\mu))\cap F^c$ and $\xi \in Z(\{\mu\xi_{|\mu|+1}\})$. Again, we conclude that $\xi\in b$ for some $b\in S$. This implies that $\xi\in Y$.
	
	If $\xi\in\dgraph^{\infty}$. As in the second part of the previous paragraph, we see that if $\xi\in a$, then for all covering $S\in C'(a)$, $\xi \in b$ for some $b\in S$. Again, $\xi\in Y$.	
\end{proof}

%
%

\section{The boundary path space of a topological graph}\label{section:topolgical.graph}

As defined by Katsura \cite{MR2067120}, a topological graph is a quadruple $\dgraphuple$ such that $\dgraph^0$, $\dgraph^1$ are locally compact Hausdorff spaces, $r:\dgraph^1\to \dgraph^0$ is a continuous function and $d:\dgraph^1\to\dgraph^0$ is a local homeomorphism. We use the same terminology of paths as in the previous section, but we change the definitions of sources, infinite receivers, regular and singular vertices in order to take into account the topologies on $\dgraph^0$ and $\dgraph^1$.

The following sets were defined by Katsura in \cite{MR2067120}: $\dgraph^0_{sce}=\dgraph^0\setminus\overline{r(\dgraph^1)}$,
\[\dgraph_{fin}^0=\{v\in\dgraph^0\,|\,\exists V\text{neighbourhood of }v\text{ such that }r^{-1}(V)\scj\dgraph^1\text{ is compact}\},\]
$\dgraph^0_{rg}=\dgraph^0_{fin}\setminus\overline{\dgraph^0_{sce}}$, which are open subsets of $\dgraph^0$, $\dgraph_{inf}^0=\dgraph^0\setminus\dgraph_{fin}^0$ and $\dgraph_{sg}^0=\dgraph^0\setminus\dgraph_{rg}^0=\dgraph_{inf}^0\cup \overline{\dgraph^0_{sce}}$, which are closed subsets of $\dgraph^0$. With this new definition of $\dgraph_{sg}^0$, the boundary path space of $\dgraph$ is again $\partial\dgraph=\{\mu\in\dgraph^*\,|\,d(\mu)\in\dgraph_{sg}^0\}\cup\dgraph^{\infty}$.

We will need the following result by Katsura.

\begin{proposition}[\cite{MR2067120}, Proposition 2.8]\label{proposition:regular.vertex.top.graph}
	For $v\in\dgraph^0$, we have that $v\in\dgraph_{rg}^0$ if and only if there exists a neighbourhood $V$ of $v$ such that $r^{-1}(V)\scj\dgraph^1$ is compact and $r(r^{-1}(V))=V$.
\end{proposition}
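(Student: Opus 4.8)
The plan is to simply unwind Katsura's definitions and exploit that a continuous image of a compact set is closed in a Hausdorff space. First I would record two reformulations. Since $\dgraph^0_{sce}=\dgraph^0\setminus\overline{r(\dgraph^1)}$, its complement is $\overline{r(\dgraph^1)}$, so "$v\notin\overline{\dgraph^0_{sce}}$" means precisely that $v$ has an open neighbourhood contained in $\overline{r(\dgraph^1)}$. And for any $V\scj\dgraph^0$ one has $r(r^{-1}(V))=V\cap r(\dgraph^1)$, so the condition $r(r^{-1}(V))=V$ is equivalent to $V\scj r(\dgraph^1)$.

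With these in hand the direction $(\Leftarrow)$ is immediate: if $V$ is a neighbourhood of $v$ with $r^{-1}(V)$ compact and $V\scj r(\dgraph^1)$, then $v\in\dgraph^0_{fin}$ by definition, while $\mathrm{int}(V)$ is an open neighbourhood of $v$ contained in $r(\dgraph^1)\scj\overline{r(\dgraph^1)}$, hence $v\notin\overline{\dgraph^0_{sce}}$; so $v\in\dgraph^0_{rg}$.

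For the forward direction I would argue as follows. From $v\in\dgraph^0_{fin}$ choose a neighbourhood $V_0$ of $v$ with $r^{-1}(V_0)$ compact, and from $v\notin\overline{\dgraph^0_{sce}}$ choose an open $U_1\ni v$ with $U_1\scj\overline{r(\dgraph^1)}$. The crucial point is that $r(r^{-1}(V_0))=V_0\cap r(\dgraph^1)$ is the continuous image of a compact set, hence compact, hence closed in the Hausdorff space $\dgraph^0$. Using the elementary fact that $\overline{A}\cap U\scj\overline{A\cap U}$ for $U$ open, one then gets
\[\overline{r(\dgraph^1)}\cap\mathrm{int}(V_0)\scj\overline{r(\dgraph^1)\cap\mathrm{int}(V_0)}\scj\overline{r(\dgraph^1)\cap V_0}=r(\dgraph^1)\cap V_0\scj r(\dgraph^1),\]
so the open set $G:=U_1\cap\mathrm{int}(V_0)$, which is a neighbourhood of $v$, satisfies $G\scj r(\dgraph^1)$. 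Finally, using that $\dgraph^0$ is locally compact, I would pick a compact neighbourhood $V$ of $v$ with $V\scj G$. Then $V\scj r(\dgraph^1)$ yields $r(r^{-1}(V))=V$; and $r^{-1}(V)$ is closed in $\dgraph^1$ (as $V$ is closed and $r$ continuous) and contained in the compact set $r^{-1}(V_0)$, hence compact. This $V$ is the desired neighbourhood.

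The main obstacle is exactly this forward direction: one is only handed membership of $v$ in the closure $\overline{r(\dgraph^1)}$, and must produce a genuine neighbourhood of $v$ sitting inside $r(\dgraph^1)$ itself and moreover with compact $r$-preimage. The resolution is the observation above, that compactness of $r^{-1}(V_0)$ forces $V_0\cap r(\dgraph^1)$ to be closed, which collapses the closure back onto $r(\dgraph^1)$ over $\mathrm{int}(V_0)$; local compactness of $\dgraph^0$ then lets us trim down to an honest compact neighbourhood.
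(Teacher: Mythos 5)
Your proof is correct. Note that the paper does not prove this statement at all --- it is imported verbatim from Katsura (Proposition 2.8 of the cited paper) --- so there is no in-paper argument to compare against; your write-up is a valid self-contained proof, and its key step (compactness of $r^{-1}(V_0)$ forces $V_0\cap r(\mathscr{E}^1)$ to be closed, so the closure $\overline{r(\mathscr{E}^1)}$ collapses onto $r(\mathscr{E}^1)$ over $\mathrm{int}(V_0)$, after which local compactness of $\mathscr{E}^0$ supplies the compact neighbourhood) is exactly the standard route Katsura takes.
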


In this section, the maps $r,d$ when considered defined on the set of paths of length $n\in\nn$ are also denoted by $r_n$ and $d_n$ respectively. For each $n\in\nn$, $d_n$ is also a local homeomorphism and, in particular, an open map.

We want to consider the patch topology of Section \ref{section:topology} on $\dgraph^*\cup\dgraph^{\infty}$. As in the previous section, for each $n\in\nn^*$, we see $\dgraph^n$ as a subset of $\dgraph^0\times(\dgraph^1)^{\times n}$, which is closed because $r$ and $d$ are continuous. For $n\in\nn$, if $U\scj\dgraph^n$ is an open set, we write $|U|=n$ (or $|U|=\ell(U)-1$ with the notation of Section \ref{section:topology}). Also, for $n\in\nn^*$ and $1\leq k\leq n$, we denote by $U_k$ the projection of $U$ on the $k$-th coordinate, and for $1\leq k\leq l\leq n$, $U_{k,l}$ is the projection of $U$ from the $k$-th to the $l$-th coordinates, so that $U_{k,l}$ consists of paths of length $l-k+1$.
 
The goal of this section is to define a covering on the basis $\mathcal{B}$ given by Proposition \ref{prop:basis.patch.topology} in such way that the subspace of Lemma \ref{lemma:subspace.generated.by.relations} is again $\partial\dgraph$. We assume that $\emptyset\in\mathcal{B}$. Let us first show that $\mathcal{B}$ is a semilattice.

\begin{lemma}\label{lemma:intersection.topological}
	For $U\subseteq\dgraph^n$ and $V\subseteq\dgraph^m$ for some $m,n\in\nn$, we have that
	\[Z(U)\cap Z(V)=
	\begin{cases}
	Z((U\times r^{-1}_{m-n}(d_n(U)))\cap V) & \text{if } n<m, \\
	Z(U \cap (V\times r^{-1}_{n-m}(d_m(V)))) & \text{if } m<n, \\
	Z(U \cap V) & \text{if } n=m.
	\end{cases}\]
\end{lemma}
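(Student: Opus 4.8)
The plan is to reduce to the general cylinder-set identity already recorded in Section~\ref{section:topology}: for $U\scj Y_k$ and $V\scj Y_l$ with $k\leq l$ one has $Z(U)\cap Z(V)=Z(\pi_{k,l}^{-1}(U)\cap V)$, and the set identity (as opposed to the statement that this set is open) holds for arbitrary subsets. Identifying $\dgraph^m$ with $Y_{m+1}$ and $\dgraph^n$ with $Y_{n+1}$, the map $\pi_{n+1,m+1}\colon\dgraph^m\to\dgraph^n$ is, for $n<m$, the truncation sending a length-$m$ path $\eta$ to the length-$n$ path $\eta_1\cdots\eta_n$ formed by its first $n$ edges. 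So for $n<m$ the content of the lemma reduces to the identity $\pi_{n+1,m+1}^{-1}(U)=U\times r_{m-n}^{-1}(d_n(U))$, where $U\times r_{m-n}^{-1}(d_n(U))$ denotes the fibre product of $U$ and $r_{m-n}^{-1}(d_n(U))$ over $\dgraph^0$ along $d_n$ and $r_{m-n}$, regarded inside $\dgraph^m$ via concatenation $(\mu,\nu)\mapsto\mu\nu$. The case $m<n$ then follows by commutativity of intersection after swapping the roles of $U$ and $V$, and the case $n=m$ is immediate, since $\xi\in Z(U)\cap Z(V)$ iff $|\xi|\geq n$ and $\xi_1\cdots\xi_n\in U\cap V$.

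For the identity $\pi_{n+1,m+1}^{-1}(U)=U\times r_{m-n}^{-1}(d_n(U))$ I would argue by double inclusion. If $\eta\in\dgraph^m$ has $\eta_1\cdots\eta_n\in U$, set $\mu=\eta_1\cdots\eta_n$ and $\nu=\eta_{n+1}\cdots\eta_m$; then $\mu\in U$, and $r_{m-n}(\nu)=r(\eta_{n+1})=d(\eta_n)=d_n(\mu)\in d_n(U)$, so $\nu\in r_{m-n}^{-1}(d_n(U))$, the gluing condition $d_n(\mu)=r_{m-n}(\nu)$ holds, and $\eta=\mu\nu$ lies in the fibre product. Conversely, if $\eta=\mu\nu$ with $\mu\in U$, $\nu\in r_{m-n}^{-1}(d_n(U))$ and $d_n(\mu)=r_{m-n}(\nu)$, then $\mu$ is precisely the length-$n$ truncation of $\eta$, so $\eta_1\cdots\eta_n\in U$. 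Feeding this into $Z(U)\cap Z(V)=Z(\pi_{n+1,m+1}^{-1}(U)\cap V)$ gives the case $n<m$. (Alternatively, one can bypass Section~\ref{section:topology} and chase elements directly: $\xi\in\dgraph^*\cup\dgraph^{\infty}$ lies in $Z(U)\cap Z(V)$ iff $|\xi|\geq m$, $\xi_1\cdots\xi_n\in U$ and $\xi_1\cdots\xi_m\in V$, and by the computation just made this is exactly membership in $Z((U\times r_{m-n}^{-1}(d_n(U)))\cap V)$.)

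The point that I expect needs genuine care is unpacking the notation $U\times r_{m-n}^{-1}(d_n(U))$: it is a fibre product over the vertex space, not a genuine Cartesian product, because the terminal vertex $d_n(\mu)$ of $\mu\in U$ must equal the initial vertex $r_{m-n}(\nu)$ of the path $\nu$ appended to it; and one should note that the side condition $r_{m-n}(\nu)\in d_n(U)$ defining $r_{m-n}^{-1}(d_n(U))$ is automatically forced once the gluing condition is imposed, so the fibre product is exactly the set of length-$m$ paths whose first $n$ edges form a path in $U$. After this, the proof is a routine unravelling of definitions. If one also wants $W:=(U\times r_{m-n}^{-1}(d_n(U)))\cap V$ to be open in $\dgraph^m$ when $U$ and $V$ are open, so that $Z(W)$ is an honest open cylinder set, this follows because $d_n$ is a local homeomorphism, hence open (so $d_n(U)$ is open), $r_{m-n}$ is continuous, and concatenation is a homeomorphism of the fibre product onto $\dgraph^m$; but this is not needed for the set-theoretic identity the lemma asserts.
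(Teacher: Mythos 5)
Your proposal is correct and takes essentially the same approach as the paper: the paper's proof is exactly the direct element chase you give in your parenthetical (fix $\mu\nu\xi$ with $|\mu|=n$, $|\nu|=m-n$ and check that $\mu\nu\xi\in Z(U)\cap Z(V)$ iff $\mu\in U$, $\mu\nu\in V$ and $r(\nu)=d(\mu)\in d_n(U)$), with the cases $m<n$ and $n=m$ dispatched as you do. Your explicit unpacking of $U\times r^{-1}_{m-n}(d_n(U))$ as a fibre product over $\dgraph^0$, and the observation that the condition $r_{m-n}(\nu)\in d_n(U)$ is forced by the gluing condition, is a point the paper leaves implicit.
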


\begin{proof}
	If $n=m$, then both sides are the set of all paths for which the prefix is in $U\cap V$.
	
	Suppose that $n<m$ and fix a path $\mu\nu\xi$ with $|\mu|=n$, $|\nu|=m-n$ and $\xi\in \dgraph^*\cup\dgraph^{\infty}$. Then, $\mu\nu\xi\in Z(U)\cap Z(V)$, if and only if, $\mu \in U$, $\mu\nu \in V$ and $r(\nu)=d(\mu)\in d(U)$, if and only if, $\mu\nu\xi\in Z((U\times r^{-1}_{m-n}(d_n(U)))\cap V)$.
	
	The case $m<n$ is analogous. 
\end{proof}

Since for every $n\in\nn$, $r_n$ is continuous and $d_n$ is an open map, observing that for $K,L\scj\dgraph^*$ compact in the disjoint union topology, $Z(K)^c\cap Z(L)^c=Z(K\cup L)^c$, the previous lemma implies that $\mathcal{B}$ is a semilattice. Also, observe that $Z(V)\cap Z(L)^c\subseteq Z(U)\cap Z(K)^c$ if and only if $V\subseteq (U\times r^{-1}_{|V|-|U|}(d(U)))\cap\dgraph^{|V|}$ and $L\supseteq K$.

We will define two kind of coverings which we will call continuation coverings and topological coverings. First, motivated by last section, for $A\subseteq r(\dgraph^1)$ open in $\dgraph^0$, we define a continuation covering of $A$ as a finite family of pairwise disjoint open subsets of $\dgraph^1$, $\{\emptyset,B_1,\ldots,B_k\}$, such that for each $i=1,\ldots,k$, $d|_{B_i}$ is as homeomorphism onto its image, $B_i$ is relatively compact, and
\[r^{-1}(A)=\bigcup_{i=1}^k B_i.\]

Let $U\scj\dgraph^n$ be an open set such that $d(U)\subseteq r(\dgraph^1)$ and $K\scj\dgraph^*$ compact, a continuation covering for $Z(U)\cap Z(K)^c$ is a family $\{\emptyset,Z((U\times B_1)\cap\dgraph^{n+1})\cap Z(K)^c,\ldots,Z((U\times B_k)\cap\dgraph^{n+1})\cap Z(K)^c\}$ such that $\{\emptyset,B_1,\ldots,B_k\}$ is a continuation covering of $d(U)$. Let $C'(Z(U)\cap Z(K)^c)$ be the set of all these families, if they exist, together with the topological coverings $\{\emptyset, Z(U_\lambda)\cap Z(K)^c\}_{\lambda\in\Lambda}$ where $\{U_\lambda\}_{\lambda\in\Lambda}$ is a family of open subsets of $\dgraph^n$ such that $U=\bigcup_{\lambda\in\Lambda}U_\lambda$.

\begin{proposition}
	The above families give a coverage $C'$ on $\mathcal{B}$.
\end{proposition}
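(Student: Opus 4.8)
The plan is to verify the meet-stability condition of a coverage: for every $a \in \mathcal{B}$, every $S \in C'(a)$, and every $b \in \mathcal{B}$ with $b \leq a$, one has $\{s \wedge b : s \in S\} \in C'(b)$. The argument splits according to whether $S$ is a topological covering or a continuation covering of $a$, and, in the continuation case, according to the form of $b$ (the two cases of the analogue of Lemma~\ref{lemma:cylinder.discrete.case} encoded in the inclusion criterion recorded just before the continuation coverings are defined). The empty-set cases are trivial, so assume $a = Z(U) \cap Z(K)^c$ with $U \subseteq \dgraph^n$ open and $b = Z(V) \cap Z(L)^c$ with $V \subseteq \dgraph^m$ open, $b \subseteq a$, so $L \supseteq K$ and $V \subseteq (U \times r_{m-n}^{-1}(d(U))) \cap \dgraph^m$ (in particular $m \geq n$).

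For a topological covering $S = \{\emptyset, Z(U_\lambda) \cap Z(K)^c\}_{\lambda \in \Lambda}$ with $U = \bigcup_\lambda U_\lambda$, I would compute $b \wedge (Z(U_\lambda) \cap Z(K)^c)$ using Lemma~\ref{lemma:intersection.topological} and the identity $Z(K)^c \cap Z(L)^c = Z(K \cup L)^c = Z(L)^c$ (since $L \supseteq K$); this gives $Z(V_\lambda) \cap Z(L)^c$ where $V_\lambda = V \cap (U_\lambda \times r_{m-n}^{-1}(d_n(U_\lambda)))$ — an open subset of $\dgraph^m$ — and since $V \subseteq U \times r_{m-n}^{-1}(d(U))$ one checks $\bigcup_\lambda V_\lambda = V$, so $\{b \wedge s : s \in S\}$ is exactly a topological covering of $b$. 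For a continuation covering $S$ of $a$, write $S = \{\emptyset\} \cup \{Z((U \times B_i) \cap \dgraph^{n+1}) \cap Z(K)^c\}_{i=1}^k$ with $\{\emptyset, B_1,\dots,B_k\}$ a continuation covering of $d(U)$. Here I expect two subcases mirroring the discrete Lemma~\ref{lemma:cylinder.discrete.case}: either $b$ "extends past the branching", i.e. $m > n$ and the $(n+1)$-st coordinate of (every path in) $V$ lies in one particular $B_{i_0}$, in which case $b \wedge s = \emptyset$ for $i \neq i_0$ and $b \wedge s = b$ for $i = i_0$, yielding the covering $\{\emptyset, b\} \in C'(b)$; or $m = n$, $d(V) \subseteq d(U) \subseteq r(\dgraph^1)$ and $V \subseteq U$, in which case $b \wedge s = Z((V \times B_i) \cap \dgraph^{n+1}) \cap Z(L)^c$, and since $\{\emptyset, B_1 \cap r^{-1}(d(V)), \dots, B_k \cap r^{-1}(d(V))\}$ is again a continuation covering of $d(V)$ (pairwise disjoint, relatively compact, $d$ injective on each, and their union is $r^{-1}(r^{-1}(d(V)))$... wait, $= r^{-1}(d(V))$ after intersecting the equality $\bigcup B_i = r^{-1}(d(U))$ with $r^{-1}(d(V))$), one gets a continuation covering of $b$. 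A subtlety: in the $m=n$ subcase the sets $B_i \cap r^{-1}(d(V))$ need not be relatively compact a priori, but a closed subset of a relatively compact set is relatively compact, and $B_i \cap r^{-1}(d(V)) = B_i \cap r^{-1}(\overline{d(V)}) \cap B_i$ — more carefully, $B_i \cap r^{-1}(d(V)) \subseteq B_i$ whose closure is compact, so it is relatively compact; and $(V \times (B_i \cap r^{-1}(d(V)))) \cap \dgraph^{n+1} = (V \times B_i) \cap \dgraph^{n+1}$ since any path in $V$ already has range-of-continuation in $d(V)$.

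The main obstacle I anticipate is bookkeeping rather than conceptual depth: making sure that when $b$ has strictly larger length than $a$, the intersection $b \wedge Z((U\times B_i)\cap\dgraph^{n+1}) \cap Z(K)^c$ is correctly identified using Lemma~\ref{lemma:intersection.topological} with the roles of the longer/shorter path swapped appropriately, and that "the $(n+1)$-st coordinate of $V$ lies in a single $B_{i_0}$" really follows from $b \subseteq a$ together with the $B_i$ being pairwise disjoint and $V$ (or rather the relevant projection) being forced into $r^{-1}(d(U)) = \bigsqcup B_i$ — this uses that $b \subseteq a$ forces $V$ into $U \times r_{m-n}^{-1}(d(U))$, hence $V_{n+1} \subseteq r^{-1}(d(U))$, but one needs connectedness-type control to land in a single $B_{i_0}$; in fact one does not — if $V_{n+1}$ meets several $B_i$ then $b$ is simply not contained in the complement of the branching and one must instead check $\{b \wedge s\}$ differently. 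I would resolve this by going back to the inclusion criterion: $b \subseteq a$ only tells us $V \subseteq (U \times r_{m-n}^{-1}(d(U)))\cap \dgraph^m$ and $L \supseteq K$, so in general for $m > n$ the intersection $b \wedge Z((U \times B_i)\cap\dgraph^{n+1})\cap Z(K)^c = Z(V \cap (U \times B_i \times r_{m-n-1}^{-1}(d(U\times B_i))))\cap Z(L)^c = Z(V^{(i)}) \cap Z(L)^c$ where $V^{(i)}$ is the open subset of $V$ consisting of paths whose $(n+1)$-st edge lies in $B_i$; since $\bigcup_i B_i = r^{-1}(d(U)) \supseteq V_{n+1}$ we get $\bigcup_i V^{(i)} = V$ and the $V^{(i)}$ are disjoint opens, so $\{b \wedge s : s \in S\}$ is a topological covering of $b$ (refining $V$ by the partition of its $(n+1)$-st coordinate). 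Thus in every case the resulting family is either a topological covering or a continuation covering of $b$, which is what we need, and I would present the proof organized as: empty cases; topological covering case; continuation covering with $|b| = |a|$; continuation covering with $|b| > |a|$, citing Lemma~\ref{lemma:intersection.topological} and Proposition~\ref{proposition:regular.vertex.top.graph}/the definition of continuation covering throughout.
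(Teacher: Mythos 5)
Your proposal is correct and follows essentially the same route as the paper: topological coverings are handled directly via Lemma \ref{lemma:intersection.topological}, continuation coverings with $|V|=|U|$ restrict to a continuation covering of $d(V)$ via the sets $B_i\cap r^{-1}(d(V))$, and continuation coverings with $|V|>|U|$ produce a \emph{topological} covering of $b$ by partitioning $V$ according to the $(n+1)$-st coordinate. Your self-correction in the last case (abandoning the idea that $V_{n+1}$ lands in a single $B_{i_0}$) arrives exactly at the paper's argument.
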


\begin{proof}
	We have to check the meet-stability property. Let $Z(V)\cap Z(L)^c\subseteq Z(U)\cap Z(K)^c$ and $S$ be a covering of $Z(U)\cap Z(K)^c$. If the covering is a topological covering, it is immediate from Lemma \ref{lemma:intersection.topological}. Suppose then that $S=\{\emptyset,Z((U\times B_1)\cap\dgraph^{n+1})\cap Z(K)^c,\ldots,Z((U\times B_k)\cap\dgraph^{n+1})\cap Z(K)^c\}$ is a continuation covering.
	
	In the case that $|V|=|U|=n$, we have that $V\subseteq U$, so that $d(V)\scj d(U)\scj r(\dgraph^1)$, $K\scj L$ and 
	\begin{align*}(Z(V)\cap Z(L)^c)&\cap(Z((U\times B_i)\cap\dgraph^{n+1})\cap Z(K)^c) =\\
	&=Z((V\times r^{-1}(d(V)))\cap (U\times B_i)\cap\dgraph^{n+1})\cap Z(K\cup L)^c \\
	&=Z((V\cap U)\times (r^{-1}(d(V))\cap B_i)\cap\dgraph^{n+1})\cap Z(L)^c \\
	&=Z((V \times (r^{-1}(d(V))\cap B_i))\cap\dgraph^{n+1})\cap Z(L)^c.
	\end{align*}
	Since $r^{-1}(d(V)) \scj r^{-1}(d(U))$, then $r^{-1}(d(V))=\cup_{i=1}^k r^{-1}(d(V))\cap B_i$. Clearly the sets $r^{-1}(d(V))\cap B_i$ for $i=1,\ldots,k$ are pairwise disjoint, each one of them is relatively compact, and each $d|_{r^{-1}(d(V))\cap B_i}$ is a homeomorphism onto its image.
	
	Now suppose that $m=|V|>|U|=n$ (we assume below that $m>n+1$, the case $m=n+1$ being analogous). In this case, $V_{1,n}\scj U$, $V_{n+1}\scj r^{-1}(d(U))$, $K\scj L$, and 
	\begin{align*}
	(Z&(V)\cap Z(L)^c)\cap(Z((U\times B_i)\cap\dgraph^{n+1})\cap Z(K)^c)=\\
	 &=Z((V_{1,n}\times V_{n+1} \times V_{n+2,m}\cap (U\times B_i \times r_{m-n-1}^{-1}(d(B_i)))\cap\dgraph^{m})\cap Z(K\cup L)^c \\
	 &=Z(V_{1,n}\times (V_{n+1}\cap B_i) \times (V_{n+2,m} \cap r_{m-n-1}^{-1}(d(B_i)))\cap\dgraph^m)\cap Z(L)^c.
	\end{align*}
	
	Due to path lengths, the family of these sets cannot form a continuation covering of $Z(V)\cap Z(L)^c$. Therefore, we have to prove that the union of the sets $V_{1,n}\times (V_{n+1}\cap B_i) \times (V_{n+2,m} \cap r_{m-n-1}^{-1}(d(B_i)))\cap\dgraph^m$ for $i=1,\ldots,k$ is equal to $V$. For the middle part,
	\[V_{n+1}\scj r^{-1}(d(U))=\bigcup_{i=1}^k B_i\]
	and hence
	\[\bigcup_{i=1}^k V_{n+1}\cap B_i =  V_{n+1}\cap \bigcup_{i=1}^k B_i = V_{n+1}.\]
	For the tail,
	\[V_{n+2,m}\scj r_{m-n-1}^{-1}(d(V_{n+1}))\scj r_{m-n-1}^{-1}\left(d\left(\bigcup_{i=1}^k B_i\right)\right)\]
	so that
	\[V_{n+2,m}=V_{n+2,m}\cap r_{m-n-1}^{-1}\left(d\left(\bigcup_{i=1}^k B_i\right)\right) = \bigcup_{i=1}^k V_{n+2,m} \cap r_{m-n-1}^{-1}(d(B_i)).\]
	
	It follows that
	\[\bigcup_{i=1}^k V_{1,n}\times (V_{n+1}\cap B_i) \times (V_{n+2,m} \cap r_{m-n-1}^{-1}(d(B_i)))\cap\dgraph^m=V\cap\dgraph^m=V.\]
\end{proof}

\begin{theorem}
	The subspace $Y$ of $\dgraph^*\cup\dgraph^{\infty}$ defined by the above coverage as in Lemma \ref{lemma:subspace.generated.by.relations} is the boundary path space of the topological graph.
\end{theorem}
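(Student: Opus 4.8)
The plan is to use Remark~\ref{remark:subspace}, just as in the proof of Theorem~\ref{theorem:bondaury.discrete.graph}: an element $\xi\in\dgraph^*\cup\dgraph^{\infty}$ lies in $Y$ if and only if for every basic set $Z(U)\cap Z(K)^c\in\mathcal{B}$ containing $\xi$ and every covering $S\in C'(Z(U)\cap Z(K)^c)$, there is some member of $S$ still containing $\xi$. Since topological coverings always leave $\xi$ inside one of their members (because $U=\bigcup_\lambda U_\lambda$ and $\xi\in Z(U)$ forces $\xi\in Z(U_\lambda)$ for some $\lambda$), only the continuation coverings impose a genuine restriction. So the whole proof reduces to understanding, for $\xi$ with $\xi\in\dgraph^*$, when a basic neighbourhood $Z(U)\cap Z(K)^c$ of $\xi$ admits a continuation covering that ``kills'' $\xi$.

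First I would prove $Y\scj\partial\dgraph$ by showing that if $\xi\in\dgraph^n$ with $d(\xi)\in\dgraph^0_{rg}$, then $\xi\notin Y$. By Proposition~\ref{proposition:regular.vertex.top.graph} there is a neighbourhood $V$ of $d(\xi)$ with $r^{-1}(V)$ compact and $r(r^{-1}(V))=V$; shrinking $V$ we may cover the compact set $r^{-1}(V)$ by finitely many open sets on which $d$ is a homeomorphism onto its image, and by a standard disjointification we obtain a continuation covering $\{\emptyset,B_1,\dots,B_k\}$ of some open $A\ni d(\xi)$ with $A\scj r(\dgraph^1)$. Taking $U\scj\dgraph^n$ an open neighbourhood of $\xi$ with $d(U)\scj A$ (possible since $d_n$ is continuous, indeed open) and $K=\emptyset$, the continuation covering $\{\emptyset\}\cup\{Z((U\times B_i)\cap\dgraph^{n+1})\}_{i=1}^k$ of $Z(U)$ contains $\xi$ in none of its members (every non-empty member consists of paths of length $>n$), while $\xi\in Z(U)$. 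Hence $\xi\notin Y$ by Remark~\ref{remark:subspace}. The only subtlety here is checking the three defining conditions of a continuation covering for the $B_i$ produced by disjointification: pairwise disjointness and $r^{-1}(A)=\bigcup B_i$ are arranged by the construction, relative compactness follows from $r^{-1}(V)$ being compact, and being a local homeomorphism on each $B_i$ is inherited from the pieces we started with.

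For the reverse inclusion $\partial\dgraph\scj Y$, I would take $\xi\in\partial\dgraph$ and a basic set $a=Z(U)\cap Z(K)^c$ with $\xi\in a$, and check that every $S\in C'(a)$ leaves $\xi$ in some member. Topological coverings are handled as above. For a continuation covering coming from a continuation covering $\{\emptyset,B_1,\dots,B_k\}$ of $d(U)$, we have $\xi\in Z(U)$ and $|U|\le\ell(\xi)$ (where $\ell(\xi)\in\nn^*\cup\{\infty\}$). If $\ell(\xi)=|U|$, i.e. $\xi$ is a finite path of length exactly $|U|=n$, then $d(\xi)\in d(U)\scj r(\dgraph^1)$; but for such a covering to exist at all one needs $d(U)\scj r(\dgraph^1)$, and the members of the covering are cylinders of paths of length $n+1$, so one must rule out this situation — this is exactly the point where $\xi\in\partial\dgraph$, hence $d(\xi)\in\dgraph^0_{sg}$, is used, and I would argue that if $d(\xi)\in\dgraph^0_{sg}$ then no continuation covering of a neighbourhood of $\xi$ of length $=\ell(\xi)$ exists (a continuation covering of an open $A\ni d(\xi)$ would, via $r(r^{-1}(A))=\bigcup r(B_i)$ and relative compactness, force $d(\xi)\in\dgraph^0_{fin}$ and $d(\xi)\notin\overline{\dgraph^0_{sce}}$, i.e. $d(\xi)\in\dgraph^0_{rg}$, contradicting singularity — here I'd lean on the characterisation in Proposition~\ref{proposition:regular.vertex.top.graph}). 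If instead $\ell(\xi)>|U|=n$, then $\xi$ has a prefix $\xi_{1,n+1}$ of length $n+1$ with $\xi_{n+1}\in r^{-1}(d(U))=\bigcup_i B_i$, so $\xi_{n+1}\in B_{i_0}$ for some $i_0$, and then $\xi\in Z((U\times B_{i_0})\cap\dgraph^{n+1})\cap Z(K)^c$, a member of the covering. Thus $\xi\in Y$.

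The main obstacle I anticipate is the bookkeeping in the $Y\scj\partial\dgraph$ direction: producing an honest continuation covering (finite, pairwise disjoint, relatively compact pieces, $d$ a homeomorphism on each, union equal to $r^{-1}(A)$) out of the compact set $r^{-1}(V)$ and the local-homeomorphism property of $d$, while simultaneously arranging $d(U)=A\scj r(\dgraph^1)$ for a small enough open $U\ni\xi$ — this disjointification-plus-shrinking argument is where all the topological-graph hypotheses (local compactness of $\dgraph^1$, $d$ a local homeomorphism, continuity of $r$, and Katsura's Proposition~\ref{proposition:regular.vertex.top.graph}) genuinely enter, and it must be done carefully enough that each $B_i$ meets all three conditions in the definition of a continuation covering. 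By contrast, once one has the correct picture of what basic neighbourhoods and continuation coverings look like, the verification via Remark~\ref{remark:subspace} is essentially the same combinatorial case analysis as in the discrete case, only with ``$\xi_{|\mu|+1}\notin F$'' replaced by ``$\xi_{n+1}\in\bigcup_i B_i$'' and ``$d(\mu)\in\dgraph^0_{sg}$'' replaced by its topological analogue.
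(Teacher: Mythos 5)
Your proposal is correct and follows essentially the same route as the paper's proof: Remark \ref{remark:subspace} reduces everything to the continuation coverings (topological coverings never exclude a point of the set they cover), the inclusion $Y\scj\partial\dgraph$ is obtained by manufacturing a continuation covering of a neighbourhood $Z(U)$ of a path ending at a regular vertex via Proposition \ref{proposition:regular.vertex.top.graph}, local compactness of $\dgraph^1$ and the fact that $d$ is a local homeomorphism, and the inclusion $\partial\dgraph\scj Y$ splits according to whether $\ell(\xi)>|U|$ (where $\xi_{|U|+1}\in r^{-1}(d(U))=\bigcup_i B_i$ saves $\xi$) or $\ell(\xi)=|U|$. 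The only organizational difference is in that last step: you argue by contraposition that the existence of a continuation covering of $d(U)$ forces $d(\xi)\in\dgraph^0_{fin}$ (shrink to a compact neighbourhood inside $d(U)$, whose $r$-preimage is closed in the compact set $\bigcup_i\overline{B_i}$) and $d(\xi)\notin\overline{\dgraph^0_{sce}}$ (since $d(U)\scj r(\dgraph^1)$ is open), hence $d(\xi)\in\dgraph^0_{rg}$; the paper instead splits $\dgraph^0_{sg}$ into $\overline{\dgraph^0_{sce}}$, where $d(U)\not\scj r(\dgraph^1)$, and $\dgraph^0_{inf}$, where some $B_i$ must fail to be relatively compact. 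These are the same argument read in opposite directions. One caveat in your first half: a ``standard disjointification'' of a finite open cover of $r^{-1}(V)$ cannot in general produce pairwise disjoint \emph{open} sets (the usual $B_i\setminus\bigcup_{j<i}B_j$ is not open), so it does not literally deliver the pairwise-disjointness clause in the definition of a continuation covering. The paper simply takes $B_i=r^{-1}(d(U))\cap A_i$ without disjointifying --- disjointness is never actually used in the proof of this theorem --- so you should do the same and note that the disjointness requirement is harmless to drop, rather than claim a disjointification that is not available.
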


\begin{proof}
	We mimic the proof of Theorem \ref{theorem:bondaury.discrete.graph} using Remark \ref{remark:subspace}. The coverage $\overline{C}$ is as in Lemma \ref{lemma:subspace.generated.by.relations}.
	
	To prove that $Y\scj \partial\dgraph$, we prove that if $\mu\in\dgraph^*$ is such that $d(\mu)\in\dgraph_{rg}^0$, then $\mu\notin Y$. For that, we need to find a open set $a$ of $\dgraph^*\cup\dgraph^{\infty}$  and a covering $S\in \overline{C}(a)$ such that $\mu\notin b$ for all $b\in S$, but $\mu\in a$.
	
	Let $V\scj\dgraph^0$ be as in Proposition \ref{proposition:regular.vertex.top.graph}. For $n=|\mu|$, since $d=d_n:\dgraph^n\to\dgraph^0$ is a local homeomorphism, there exists $U\scj\dgraph^n$ open such that $\mu\in U$, $d|_U$ is a homeomorphism onto its image and $d(U)\scj \mathrm{Int}(V)\scj V=r(r^{-1}(V))\scj r(\dgraph^1)$. Since $r^{-1}(V)\scj\dgraph^1$ is compact, $\dgraph^1$ is locally compact and $d:\dgraph^1\to\dgraph^0$ is a local homeomorphism, there exists a finite open cover $\{A_1,\ldots,A_k\}$ of $r^{-1}(V)$ such that $d|_{A_i}$ is a homeomorphism onto its image and $A_i$ is relatively compact for all $i=1,\ldots,k$. For each $i$, define $B_i=r^{-1}(d(U))\cap A_i$ and notice that $d|_{B_i}$ is still a homeomorphism onto its image. Also $B_i\scj r^{-1}(V)$ so that it is relatively compact and 
	\[\bigcup_{i=1}^k B_i=\bigcup_{i=1}^k r^{-1}(d(U))\cap A_i= r^{-1}(d(U))\cap\bigcup_{i=1}^k A_i=r^{-1}(d(U)).\]
	By taking $a=Z(U)$ and $S=\{\emptyset,Z((U\times B_1)\cap\dgraph^{n+1}),\ldots,Z((U\times B_k)\cap\dgraph^{n+1})\}$, we have that $\mu\in a$, but $\mu\notin b$ for all $b\in S$.
	
	Now, let us prove that $\partial\dgraph\scj Y$. First observe that if $\mu\in Z(U)\cap Z(K)^c$ for some element of $\mathcal{B}$, then for any topological covering $S$ of $Z(U)\cap Z(K)^c$, there exists $b\in S$ such that $\mu\in b$. Indeed, if $S=\{\emptyset,Z(U_\lambda)\cap Z(K)^c\}_{\lambda\in \Lambda}$ with $\bigcup_{\lambda\in\Lambda}U_\lambda=U$, then, since $\mu_{1,|U|}\in U$, we have that $\mu_{1,|U|}\in U_\lambda$ for some $\lambda\in \Lambda$, and therefore $\mu\in Z(U_\lambda)\cap Z(K)^c$. In the case that $|\mu|>|U|$	if $S=\{\emptyset,Z((U\times B_1)\cap\dgraph^{|U|+1})\cap Z(K)^c,\ldots,Z((U\times B_k)\cap\dgraph^{|U|+1})\cap Z(K)^c\}$ is a continuation covering, since $\mu_{|U|+1}\in r^{-1}(d(U))$, there exists $i=1,\ldots,k$ such that $\mu_{|U|+1}\in B_i$, in which case $\mu\in Z((U\times B_i)\cap\dgraph^{|U|+1})\cap Z(K)^c$.
	
	From the above discussion, we see that if $\mu\in\dgraph^{\infty}$, then $\mu\in Y$, since $|\mu|>n$ for all $n\in\nn$. And for $\mu\in\dgraph^*$ with $d(\mu)\in\dgraph_{sg}^0$, it is sufficient to prove that if $\mu\in Z(U)\cap Z(K)^c$ for some element of $\mathcal{B}$ with $|\mu|=|U|$, then $Z(U)\cap Z(K)^c$ does not admit a continuation covering. In the case that $d(\mu)\in\overline{\dgraph_{sce}^0}$, any neighbourhood of $d(\mu)$ contains at least a vertex $v\notin r(\dgraph^1)$. In particular $d(U)\nsubseteq r(\dgraph^1)$ so that it does not admit a continuation covering, which implies the same for $Z(U)\cap Z(K)^c$. Finally, in the case that $d(\mu)\in\dgraph_{inf}^0$, since $\dgraph^0$ is locally compact, there exists $V\scj d(U)$ compact neighbourhood of $d(\mu)$, and by the definition of $\dgraph_{inf}^0$, $r^{-1}(V)$ is not compact. If $r^{-1}(d(U))=\bigcup_{i=1}^k B_i$ for some open sets $B_i\scj \dgraph^1$, $i=1,\ldots,k$, then $r^{-1}(V)\scj\overline{\bigcup_{i=1}^k B_i}=\bigcup_{i=1}^k \overline{B_i}$. This implies that $B_i$ is not relatively compact for some $i=1,\ldots,k$, and hence $Z(U)\cap Z(K)^c$ does not admit a continuation covering.
\end{proof}

\bibliographystyle{abbrv}
\bibliography{sgtg_ref}
	
\end{document}